\date{\today}
\theoremstyle{definition}
\newtheorem{theorem}{Theorem}
\newtheorem{remark}[theorem]{Remark}
\newtheorem{lemma}[theorem]{Lemma}
\newtheorem{corollary}[theorem]{Corollary}
\newtheorem{proposition}[theorem]{Proposition}
\numberwithin{equation}{section}
\theoremstyle{definition}
\theoremstyle{definition}
\newtheorem{definition}[theorem]{Definition}
\begin{document}

\newcommand{\Pos}{Pos}
\newcommand{\Hess}{\operatorname{Hess}}
\newcommand{\Id}{\operatorname{Id}}
\newcommand{\Sym}{\operatorname{Sym}}
\newcommand{\Hom}{\operatorname{Hom}}

\newcommand{\argmin}{\operatorname{argmin}}
\title[Differentiability of the argmin function]{Differentiability of the argmin function and a minimum principle for semiconcave subsolutions}
\author{Julius  Ross and David Witt Nystr\"om}

\begin{abstract}
Suppose  $f(x,y) + \frac{\kappa}{2} \|x\|^2 - \frac{\sigma}{2}\|y\|^2$ is convex where $\kappa\ge 0, \sigma>0$, and the argmin function $\gamma(x) = \{ \gamma : \inf_y f(x,y) = f(x,\gamma)\}$ exists and is single valued.  We will prove $\gamma$ is  differentiable almost everywhere.  As an application we deduce a minimum principle for certain semiconcave subsolutions.
\end{abstract}
\maketitle

\renewcommand{\Pos}{\operatorname{Pos}}
\newcommand{\calPos}{\mathcal P}
\newcommand{\Int}{\operatorname{Int}}
\setcounter{tocdepth}{1}

\section{Introduction}\label{sec:introduction}

The first part of this paper is a proof of the following elementary statement about regularity of certain argmin functions. 

\begin{theorem}\label{thm:intoargmin1} \ 
Suppose $f:\mathbb R^{n}\times \mathbb R^{m}\to \mathbb R$ is such that
\begin{enumerate}
    \item There are $\kappa\ge 0$ and $\sigma>0$ so  $$f(x,y) + \frac{\kappa}{2} \|x\|^2-\frac{\sigma}{2}\|y\|^2\text{ is convex.}$$
    \item For each $x\in \mathbb R^{n}$ there is a unique $\gamma(x)$ such that 
    $$\inf_y f(x,y) = f(x,\gamma(x)).$$
\end{enumerate}
Then the function $\gamma$ is differentiable almost everywhere.
\end{theorem}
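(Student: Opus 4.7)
My plan is to extract differentiability of $\gamma$ from Alexandrov's theorem via a monotonicity inequality for the subdifferential of $F$. Set $G(x,y):=f(x,y)+\frac{\kappa}{2}\|x\|^2=F(x,y)+\frac{\sigma}{2}\|y\|^2$, where $F$ denotes the convex function of hypothesis (1). Then $G$ is jointly convex on $\mathbb R^{n+m}$ (sum of two convex functions) and $\sigma$-strongly convex in $y$, and its argmin in $y$ is still $\gamma(x)$. The envelope
\[
\Phi(x):=\inf_y G(x,y)=G(x,\gamma(x))
\]
is then finite-valued and convex, since the partial infimum of a jointly convex function is convex. By Alexandrov's theorem, $\Phi$ is twice differentiable a.e.; I will use the stronger subdifferential form, which says that at a.e.\ $x_0$ there is a symmetric matrix $H$ such that
\[
\sup_{p\in\partial\Phi(x)}\bigl\|p-\nabla\Phi(x_0)-H(x-x_0)\bigr\|=o(\|x-x_0\|).
\]

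Next, I would record a standard correspondence of subgradients: $p\in\partial\Phi(x)$ is equivalent to $(p,0)\in\partial G(x,\gamma(x))$ (using that $\gamma(x)$ is the unique minimizer in $y$), which in turn is equivalent to $(p,-\sigma\gamma(x))\in\partial F(x,\gamma(x))$ via $G=F+\frac{\sigma}{2}\|y\|^2$. Monotonicity of $\partial F$ (automatic from convexity of $F$) applied to the pair of such subgradients at $(x_1,\gamma(x_1))$ and $(x_2,\gamma(x_2))$ then yields the key inequality
\[
\sigma\|\gamma(x_1)-\gamma(x_2)\|^2\le (p_1-p_2)\cdot(x_1-x_2),\qquad p_i\in\partial\Phi(x_i).
\]

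At a point $x_0$ of Alexandrov twice differentiability, the subdifferential estimate gives $(p(x)-\nabla\Phi(x_0))\cdot(x-x_0)=O(\|x-x_0\|^2)$, so combining with the monotonicity inequality above produces
\[
\|\gamma(x)-\gamma(x_0)\|=O(\|x-x_0\|)\qquad\text{as } x\to x_0.
\]
In other words, the pointwise Lipschitz constant $\limsup_{x\to x_0}\|\gamma(x)-\gamma(x_0)\|/\|x-x_0\|$ is finite at almost every $x_0$. Stepanoff's theorem (differentiability a.e.\ on the set where the pointwise Lipschitz constant is finite) then gives that $\gamma$ is differentiable almost everywhere.

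The main obstacle is invoking the correct refined form of Alexandrov's theorem: the uniform-in-$p\in\partial\Phi(x)$ estimate displayed above is genuinely stronger than the scalar second-order Taylor expansion of $\Phi$, and this is where convex analysis does real work. The subgradient correspondence between $\partial\Phi$ and $\partial F$ at $(x,\gamma(x))$, while routine, also deserves careful checking since it uses the sum rule and the uniqueness of the minimizer in an essential way.
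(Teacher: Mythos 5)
Your proof is correct, and it takes a genuinely more direct route than the paper's. The two arguments share their raw ingredients: your subgradient correspondence $p\in\partial\Phi(x)\Rightarrow(p,0)\in\partial G(x,\gamma(x))$ is Lemma \ref{lem:gradientatargmax}, and your strong-monotonicity inequality $\sigma\|\gamma(x_1)-\gamma(x_2)\|^2\le(p_1-p_2)\cdot(x_1-x_2)$ is precisely the estimate \eqref{eq:thefunctionsGH1} from the proof of Proposition \ref{prop:thefunctionsGH}, specialized to the subgradients $(p_i,0)$ at the points $(x_i,\gamma(x_i))$. Where the routes diverge is in how this monotonicity is exploited. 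The paper uses it to build a globally Lipschitz inverse $H$ of $(x,y)\mapsto(x,y)+\nabla_{(x,y)}f$, sets up the functional equation $J(x,u,\gamma(x))=0$, and invokes a Lipschitz implicit function theorem; this yields the stronger structural conclusion of Theorem \ref{thm:argmin1}(ii) --- a Lipschitz $\phi$ with $\gamma(x)=\phi(x,u)$ near \emph{every} $(x_0,u_0)$ --- from which calmness a.e.\ then follows via Alexandrov. You instead feed the monotonicity inequality directly into the Alexandrov estimate \eqref{eq:twicediffgradient} (of which you only need the Cauchy--Schwarz consequence $\|p-\nabla\Phi(x_0)\|=O(\|x-x_0\|)$, i.e.\ \eqref{eq:argmax2eq1}) to get $\sigma\|\gamma(x)-\gamma(x_0)\|^2\le\|p-\nabla\Phi(x_0)\|\,\|x-x_0\|=O(\|x-x_0\|^2)$ at almost every $x_0$, and then conclude with Stepanov exactly as in Corollary \ref{cor:wisdiffae}. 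This bypasses the surjectivity and invertibility analysis of $G$, the function $J$, and the Lipschitz implicit function theorem altogether, at the cost of not producing statement (ii) (and its everywhere-defined local representation of $\gamma$), which the paper does not actually need for Theorem \ref{thm:intoargmin1}. Two small remarks: only the forward implication of your subgradient equivalence is used, and neither direction in fact requires uniqueness of the minimizer; and the nonemptiness of $\partial\Phi(x)$ for every $x$, needed to have a $p$ to insert into the monotonicity inequality, holds because $\Phi$ is a finite convex function on all of $\mathbb R^n$.
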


Our motivation for this is the following.  Following Harvey-Lawson \cite{HL_Dirichletduality,HL_Dirichletdualitymanifolds}, by a \emph{(primitive) subequation} on an open $X\subset \mathbb R^n$ we mean a subset $F\subset J^2(X)$ of the space of $2$-jets on $X$ with certain properties.   Given such an $F$ and a $\mathcal C^2$ function $f$, we say that $f$ is \emph{$F$-subharmonic} if every $2$-jet of $f$ lies in $F$.  Moreover, using the so-called viscosity technique it is possible to extend the notion of $F$-subharmonicity to any upper-semicontinuous function (details and precise definitions will be given in  \S\ref{sec:Fsubharmonics}).  

In our previous work \cite{MP1} we introduced a notion of ``product subequation" $F\#\calPos$ on $X\times \mathbb R^m$ and show (under suitable hypothesis) that if $F$ is convex and $f$ is $F\#\calPos$-subharmonic then its marginal function
$$ g(x) := \inf_y f(x,y)$$
is $F$-subharmonic.  This statement generalises the classical statement that the marginal function of a convex function is again convex.  We will use Theorem \ref{thm:intoargmin1} to prove a similar minimum principle that does not require $F$ to be convex:

\begin{theorem}\label{thm:introminimum}
Let $X\subset \mathbb R^n$ be open and $F\subset J^2(X)$ be a constant-coefficient primitive subequation that depends only on the Hessian part.  Suppose 
$$f:X\times \mathbb R^m\to \mathbb R$$
is locally semiconcave, bounded from below, and $F\#\calPos$-subharmonic.  Then the marginal function
$$ g(x) = \inf_{y} f(x,y)$$
is $F$-subharmonic on $X$.
\end{theorem}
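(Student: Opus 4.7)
My plan is to reduce the viscosity $F$-subharmonicity of $g$ to a pointwise almost-everywhere Hessian condition, using Theorem \ref{thm:intoargmin1} for regularity of the argmin and the semiconcavity of $g$ as a bridge back to the viscosity notion.

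Since the infimum of a uniformly semiconcave family is semiconcave with the same modulus, $g(x)=\inf_y f(x,y)$ inherits local semiconcavity on $X$; in particular $g$ is continuous, and by Alexandrov's theorem it is twice differentiable almost everywhere. To apply Theorem \ref{thm:intoargmin1} I would first regularize by setting $f_\epsilon(x,y):=f(x,y)+\tfrac{\epsilon}{2}\|y\|^2$, which remains locally semiconcave and $F\#\calPos$-subharmonic while becoming strictly convex and coercive in $y$ (giving existence and uniqueness of the argmin $\gamma_\epsilon$). After localizing to a compact $K\Subset X$ and modifying $f_\epsilon$ outside a bounded region---so that hypothesis~(1) of Theorem \ref{thm:intoargmin1} holds globally without disturbing $\gamma_\epsilon$ over $K$---Theorem \ref{thm:intoargmin1} yields that $\gamma_\epsilon$ is differentiable almost everywhere on $K$.

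At any $x_0\in K$ lying simultaneously in the a.e.\ sets of twice-differentiability of $g_\epsilon:=\inf_y f_\epsilon(\cdot,y)$ and of differentiability of $\gamma_\epsilon$, implicit differentiation of the first-order condition $\nabla_y f_\epsilon(x,\gamma_\epsilon(x))=0$ gives the Schur complement identity
$$
\Hess g_\epsilon(x_0) = \nabla_x^2 f_\epsilon - \nabla_{xy}^2 f_\epsilon \, (\nabla_y^2 f_\epsilon)^{-1} \, \nabla_{yx}^2 f_\epsilon
$$
evaluated at $(x_0,\gamma_\epsilon(x_0))$. By the definition of the product subequation $F\#\calPos$ in \cite{MP1}, the $F\#\calPos$-subharmonicity of $f_\epsilon$ at this smooth point forces this Schur complement into $F$. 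A standard bridge for semiconcave viscosity subsolutions (essentially Jensen's lemma) then promotes the a.e.\ condition $\Hess g_\epsilon\in F$ to viscosity $F$-subharmonicity of $g_\epsilon$. Finally, as $\epsilon\searrow 0$ one has $g_\epsilon\searrow g$ pointwise (using that $f$ is bounded below, so minimizing sequences in $y$ certify the convergence), and since $F$-subharmonicity is preserved under monotone decreasing limits with u.s.c.\ limit---and $g$ is continuous by the first step---we conclude that $g$ is $F$-subharmonic on $X$.

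The main obstacle is arranging hypothesis~(1) of Theorem \ref{thm:intoargmin1} for $f_\epsilon$: local semiconcavity of $f$ gives only an upper bound on the full Hessian, while (1) demands a lower bound of the form $\Hess f_\epsilon\ge\operatorname{diag}(-\kappa I,\sigma I)$. The perturbation $+\tfrac{\epsilon}{2}\|y\|^2$ manufactures the $y$-block lower bound, but securing the $x$-block lower bound requires a careful modification of $f_\epsilon$ outside a compact region---combining viscosity cutoff techniques with the global boundedness-below of $f$---so that the modified function satisfies~(1) globally while coinciding with $f_\epsilon$ on the region used to read off the conclusion for $g$.
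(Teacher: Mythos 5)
There are two genuine gaps here, and together they are precisely the difficulties the paper is organized around. First, the step you flag as ``the main obstacle'' --- securing the lower bound $\Hess f_\epsilon \ge \operatorname{diag}(-\kappa I,\sigma I)$ required by hypothesis (1) of Theorem \ref{thm:intoargmin1} --- is not a matter of cutting off outside a compact region: local semiconcavity plus $F\#\calPos$-subharmonicity gives an upper bound on the full Hessian and convexity in $y$, but no local lower bound whatsoever on the $x$-block, so no modification at infinity can produce one. The missing tool is a \emph{partial sup-convolution in the $x$-variable}, $f^{\epsilon,p}(x,y)=\sup_z\{f(z,y)-\tfrac{1}{2\epsilon}\|z-x\|^2\}$, which manufactures convexity of $f^{\epsilon,p}+\tfrac{1}{2\epsilon}\|x\|^2$ while (because $F$ is constant-coefficient) preserving $F\#\calPos$-subharmonicity, and --- somewhat surprisingly --- also preserving fibrewise semiconcavity in $y$. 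This same step is what rescues your final ``bridge'': the Harvey--Lawson Almost Everywhere Theorem (and Jensen's lemma) applies to \emph{semiconvex} functions, not semiconcave ones. For a merely semiconcave $g$ the implication ``$\Hess g\in F$ a.e. $\Rightarrow$ $g$ is $F$-subharmonic'' is false (take $g(x)=-|x|$ and $F=\calPos$: the Hessian is $0$ a.e., but $(0,-M)$ is an upper contact jet at the origin for every $M>0$). One needs $g_\epsilon$ semiconvex, which again comes only from the sup-convolution in $x$.

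Second, the Schur complement identity obtained by ``implicit differentiation of $\nabla_y f_\epsilon(x,\gamma_\epsilon(x))=0$'' presupposes that $f_\epsilon$ is twice differentiable at $(x_0,\gamma_\epsilon(x_0))$. This cannot be arranged: Alexandrov gives twice differentiability of $f_\epsilon$ only off a null set of $\mathbb R^{n+m}$, and the graph of $\gamma_\epsilon$ has measure zero, so $f_\epsilon$ may fail to be twice differentiable at \emph{every} point of the form $(x,\gamma_\epsilon(x))$ --- this is exactly remark (5) in the introduction explaining why the naive $\mathcal C^2$ argument does not survive regularization. The correct substitute is to build, at a point $x_0$ where $g_\epsilon$ is twice differentiable and $\gamma_\epsilon$ is differentiable, an explicit quadratic
$$q(x,y)=g_\epsilon(x_0)+\nabla g_\epsilon|_{x_0}\!\cdot\!(x-x_0)+\tfrac12 (x-x_0)^t\Hess_{x_0}(g_\epsilon)(x-x_0)+\tfrac{\epsilon}{2}\|x-x_0\|^2+\kappa_2\,\|y-y_0-\Gamma(x-x_0)\|^2,$$
with $\Gamma=D\gamma_\epsilon|_{x_0}$, and to verify $q\ge f_\epsilon$ near $(x_0,\gamma_\epsilon(x_0))$ using only the fibrewise semiconcavity bound $f_\epsilon(x,y)\le f_\epsilon(x,\gamma_\epsilon(x))+\tfrac{\kappa_2}{2}\|y-\gamma_\epsilon(x)\|^2$. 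This exhibits an upper contact jet of $f_\epsilon$ whose restriction $i_\Gamma^*$ is $\Hess_{x_0}(g_\epsilon)+\epsilon\operatorname{Id}$, which lies in $F$ by the definition of $F\#\calPos$; no second derivatives of $f_\epsilon$ are ever used. Your opening reduction (adding $\tfrac{1}{j}\|y\|^2$ to get a single-valued argmin, then letting $j\to\infty$) and the final decreasing-limit argument are fine and match the paper.
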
\vspace{1mm}
\noindent A few remarks are in order.\vspace{2mm}

 \begin{asparaenum}\setlength{\itemsep}{3mm}
\item The semiconcavity assumption on $f$ is rather unnatural, since one would expect a subsolution to have some kind of convexity rather than concavity, but it captures what we are able to prove.  Observe that $f$ is certainly locally semiconcave if it is $\mathcal C^{1,1}_{loc}$.   
\item The assumption that $f$ is $F\#\calPos$-subharmonic implies that for each $x$ the function $y\mapsto f(x,y)$ is convex.    This along with the semiconcavity assumption implies that $y\mapsto f(x,y)$ is $\mathcal C^{1,1}_{loc}$.
\item Theorem \ref{thm:introminimum} can be  proved rather easily when $f$ is $\mathcal C^2$ (see \cite[Prop. 7.5]{MP1} for a stronger statement).  To do so, we first approximate $f$ by adding a small multiple of the function $(x,y)\mapsto \|y\|^2$, so there is no loss in assuming $f$ is strictly convex in $y$ and that for each fixed $x$ the function $y\mapsto f(x,y)$ attains its unique minimum at some point $\gamma(x)$.  Said another way, $\gamma(x)$ is the unique point such that
$$ \frac{\partial f}{\partial y}|_{(x,\gamma(x))} = 0.$$
If we assume $f$ is $\mathcal C^2$ we can then:
\begin{enumerate}[(a)]
\item Use the implicit function theorem to deduce that $\gamma$ is $\mathcal C^1$.
\item Use the chain rule to compute the Hessian of $g$ at a point $x$ in terms of the Hessian of $f$ at the point $(x,\gamma(x))$ and the derivative of $\gamma$ at $x$.
\end{enumerate}
The combination of (b) and assumption that $f$ is $F\#\calPos$-subharmonic yields that $g$ is $F$-subharmonic as claimed.
\item If we assume furthermore that $F$ is convex, then using smooth mollification to approximate any upper-semicontinuous $F\#\calPos$-subharmonic function by those that are $\mathcal C^2$, we can deduce a much more general minimum principle -- this is the approach taken in \cite{MP1}.
\item If instead we assume that $f$ is merely $\mathcal C^{1,1}_{loc}$ then it is of course twice differentiable almost everywhere.  However it may well be that $f$ is not twice differentiable at any point of the form $(x,\gamma(x))$ so part (b) of the above argument does not apply.\medskip
\end{asparaenum}

To prove Theorem \ref{thm:introminimum} we will first use a partial-sup convolution to approximate $f$ by $F\#\calPos$-subharmonic functions $f_{\epsilon}$ such that
$$ f_{\epsilon}(x,y) + \frac{1}{2\epsilon} \|x\|^2 - \frac{\epsilon}{2}\|y\|^2 \text{ is convex}.$$
In particular for fixed $x$ the function $y\mapsto f_{\epsilon}(x,y)$ is strongly convex, and we will further arrange so the argmin of ${f_{\epsilon}}$ is a well-defined single-valued function $\gamma$.  Having done so we can apply Theorem \ref{thm:intoargmin1}  to deduce that $\gamma$ is differentiable almost everywhere, which will act in lieu of the implicit function argument used in (a).   From this one can prove, essentially from the definition, that at almost every point $x$ the Hessian of $g$ is contained in $F$.  As $g$ is semiconvex, this is known by the Almost-Everywhere Theorem of Harvey-Lawson \cite{HL_AE} to be enough to conclude that $g$ is $F$-subharmonic.

\subsection*{Comparison with other work: }
The authors do not have sufficient expertise to properly survey all previously known regularity results that are related to Theorem \ref{thm:intoargmin1}.  Suffice to say there has been much interest in studying regularity of marginal functions (by which we mean functions of the form $\inf_y f(x,y)$ or $\sup_y f(x,y)$  for some function $f$ which also go under the name ``performance function") due to its relevance for optimization problems (see for instance \cite{Aubin1,Quasidifferentiabilityandrelatedtopics,non-smooth,Globaloptimzationinaction} and the references therein).   For example, various regularity properties of marginal functions have been shown when $f$ has some convexity property (see for example \cite[Theorems 23.4 and 24.5]{Rockafellar_Convex_analysis}) and without this convexity hypothesis (e.g.  \cite{Clarke_generalizedgradients,Penot1982a,Penot04,Penot04b,ward1} to list just a few). 

Much less appears to have been written about regularity of the argmin function itself.    We remark that in general the argmin function will be multi-valued, and so regularity must be phrased in terms of set-valued functions \cite{Clarke_generalizedgradients}.   The only previous such results we have found relate to continuity rather than differentiability (for example \cite[Theorem 2.10]{wets_Lipschitzcontinuity},  which is taken from \cite[Theorems 1.17 and 7.41]{Rockafellar_Wets_VariationalAnalysis},  gives conditions under which the argmin function is outer semicontinuous). 

Regarding the minimum principle, the fact that the marginal function of a convex function is again convex is a basic property in convex analysis.  In the complex case this has an analog for plurisubharmonic functions due to Kiselman \cite{KiselmanInvent,Kiselman}.  Both convexity and plurisubharmonicity are massively generalized through the notion of $F$-subharmonic functions which uses the viscosity technique that arose in the study of fully non-linear degenerate second-order differential equations (in particular the work of Caffarelli--Nirenberg--Spruck \cite{CaffarelliNirenbergSpruckIII} and Lions--Crandall--Ishii \cite{CILUserGuide}, who often refer to such functions as \emph{subsolutions}).    

Our motivation for introducing the product $F\#\calPos$ came from a desire to generalise this minimum principle to general subequations, which we do in \cite{MP1} under the assumption that $F$ is convex.  As discussed above, this assumption is needed only to be able to approximate $F\#\calPos$-subharmonic functions by smooth ones, and thus suggests that it is a facet of the proof rather than an essential requirement.  Theorem \ref{thm:introminimum} is, as far as we know, the first such minimum principle that does not require any convexity hypothesis on the subequation in question.   For further background in this area the reader is referred to \cite{MP1}.\\

\noindent {\bf Organization: } Section \ref{sec:lipschitzargmin} is devoted to the proof of Theorem \ref{thm:intoargmin1}.    In \S\ref{sec:statement} we recall some standard terminology and notation concerning semiconvex functions, and use this to give a refined statement (Theorem \ref{thm:argmin1}) about calmness of the argmin function.   Theorem \ref{thm:intoargmin1} then follows immediately from this by Stepanov's Theorem (see Corollary \ref{cor:wisdiffae}).    In \S\ref{sec:propertiessemiconvex}  we collect some further properties of semiconvex functions, in preparation for \S\ref{sec:proofargminthm} in which we give a functional equation for the argmin function.  Then the proof of Theorem \ref{thm:argmin1} is given in \S\ref{sec:proofargmin1} using the Implicit Function Theorem for Lipschitz maps (which for completeness is proved in Appendix \ref{sec:appendiximplicit}).

In Section \ref{sec:Fsubharmonics} we summarize the basics of $F$-subharmonic functions in a way suited to our needs, including the idea of product subequations in \S\ref{sec:productsubequations}.    In Section \ref{sec:partialsup} we describe the partial sup-convolution, which is used in Section \ref{sec:fsubharmonicityofmarginals} to complete the proof of Theorem \ref{thm:introminimum}.

\section{Differentiability of the Argmin Function}\label{sec:lipschitzargmin}

\subsection{Statement}\label{sec:statement}

In this section we prove that the argmin function of a certain kind of semiconvex functions is differentiable (resp.\ calm) almost everywhere.    Suppose $\Omega\subset \mathbb R^{n+m}$ is open and let $\pi:\mathbb R^{n+m}\to \mathbb R^n$ be the projection, and write $$\Omega_x = \{ y\in \mathbb R^m : (x,y)\in \Omega\}.$$   We will assume throughout that $\Omega$ is convex and that each $\Omega_x$ is connected.  Now suppose
$$f:\Omega \to \mathbb R$$ and set
$$ g(x) : = \inf_{y\in \Omega_x} f(x,y) \text{ for } x\in \pi(\Omega).$$
\begin{definition}[Argmin]
The \emph{argmin function} is the set-valued function
$$ \argmin_f(x) : = \{ \gamma\in \Omega_x : \inf_{y\in \Omega_x} f(x,y) = f(x,\gamma)\}$$
where we allow the possibility that $\argmin_f(x)$ is empty.
\end{definition}

Below we shall make assumptions on $f$ that ensure that $\argmin(x)$ is everywhere defined and single-valued. In such cases we shall write
$$ \gamma(x) = \argmin_f(x)$$
so
$$ f(x,\gamma(x)) = \inf_{y\in \Omega_x} f(x,y) =g(x) \text{ for all } x\in \pi(\Omega).$$

The precise statement we will prove requires some terminology concerning subdifferentials.  Let $X\subset \mathbb R^n$ be open.

\begin{definition}
Suppose $g:X\to \mathbb R$.  For each $x_0\in  X$ define
$$\nabla_{x_0} g = \{ u\in \mathbb R^n: g(x) - g(x_0) \ge u.(x-x_0) \text{ for all }x \text{ sufficiently near } x_0 \}$$
which may be empty.  We call any $u\in \nabla_{x_0}g$ \emph{a lower support vector} for $g$ at $x_0$.  Similarly if $\kappa\in \mathbb R$ we let
$$ \nabla^\kappa_{x_0} g = \{u\in \mathbb R^n : g(x) - g(x_0) \ge u\cdot(x-x_0) - \frac{\kappa}{2} \|x-x_0\|^2 \text{ for all } x \text{ sufficiently near } x_0\}.$$
\end{definition}

\begin{definition}[Semiconvexity and Semiconcavity]
Let $\kappa\ge 0$.  We say $g:X\to \mathbb R$ is \emph{$\kappa$-semiconvex}   (resp. \emph{$\kappa$-semiconcave}) if $g(x) + \frac{\kappa}{2} \|x\|^2$ is convex (resp. $g(x) - \frac{\kappa}{2} \|x\|^2$ is concave).      If $g$ is $\kappa$-semiconvex/semiconcave for some $\kappa \ge 0$ then we say simply $g$ is \emph{semiconvex/semiconcave}.
\end{definition}

\begin{remark}
In the literature one will also find the term \emph{weakly-convex/concave} also used for semiconvex/semiconcave.
\end{remark}

 One can check that $g:X\to \mathbb R$ is locally $\kappa$-semiconvex if and only if $\nabla_{x_0}^{\kappa}g$ is non-empty for all $x_0$.  Moreover $g$ is differentiable at $x_0$ if and only if $\nabla_{x_0}g$ is a singleton, in which case its unique element is the derivative of $g$ at $x_0$.  Finally if $g$ is a convex function on a convex set $X$ then
$$ \nabla^\kappa_{x_0} g = \{u\in \mathbb R^n : g(x) - g(x_0) \ge u\cdot(x-x_0) - \frac{\kappa}{2} \|x-x_0\|^2 \text{ for all } x \}.$$


We now give a refined statement of Theorem \ref{thm:intoargmin1} that will be proved in section  \S\ref{sec:proofargminthm}.

\begin{theorem}[Argmin is calm almost everywhere]\label{thm:argmin1} \ 
Let $\Omega\subset \mathbb R^{n+m}$ be open, convex and so that $\Omega_x$ is connected for all $x$.  Also let $f:\Omega\to \mathbb R$ and suppose there are $\kappa\ge 0$ and $\sigma>0$ so that
    
     \begin{equation}\label{eq:condagmin1}f(x,y) + \frac{\kappa}{2} \|x\|^2-\sigma \|y\|^2\text{  is convex and}\end{equation}
 \begin{equation}\label{eq:condagmin2} 
\argmin_f(x) \text{ is non-empty for all } x\in \pi(\Omega).
  \end{equation}
Then 
\begin{enumerate}[(i)]
\item The function $$g(x):=\inf_y f(x,y) = f(x,\gamma(x)) \text{ for }x\in\pi(\Omega)$$ is $\kappa$-semiconvex and  $\gamma(x) := \argmin_f(x)$ is single valued for all $x\in \pi(\Omega)$.
\item  Given any $x_0\in \pi(\Omega)$ and $u_0\in \nabla_{x_0}^{\kappa} g$ there exists a Lipschitz function $$\phi:V\to \mathbb R$$ defined on a neighbourhood $V$ of $(x_0,u_0)$ in $\mathbb R^{2n}$ such that
$$ \gamma(x) = \phi(x,u) \text{ for all } (x,u)\in V \text{ with } u\in \nabla^\kappa_{x} g.$$
\item The function $\gamma$ is calm almost everywhere.  That is, for almost all $x_0\in \pi(\Omega)$ there are $C$ and $\delta>0$ such that
\begin{equation} \|\gamma(x) - \gamma(x_0)\| \le C \| x-x_0\|\text{ for } \|x-x_0\|<\delta.\label{eq:wlocallylipschitzae}\end{equation}
\end{enumerate}
\end{theorem}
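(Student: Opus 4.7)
The plan is to dispatch the three parts in order, with (ii) being the heart of the matter. For (i), set $F(x,y):=f(x,y)+\tfrac{\kappa}{2}\|x\|^2-\sigma\|y\|^2$ (convex by hypothesis) and $G(x,y):=f(x,y)+\tfrac{\kappa}{2}\|x\|^2=F(x,y)+\sigma\|y\|^2$; then $G$ is jointly convex, and each slice $y\mapsto G(x,y)$ is $2\sigma$-strongly convex. Strong convexity forces uniqueness of the $y$-minimizer, giving $\gamma$ single-valued, while the identity
$$g(x)+\frac{\kappa}{2}\|x\|^2=\inf_{y\in\Omega_x}G(x,y)$$
exhibits $\tilde g:=g+\tfrac{\kappa}{2}\|\cdot\|^2$ as the partial infimum of a jointly convex function on a convex domain with convex slices, hence convex.

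For (ii), my plan is to identify a functional equation for $\gamma(x)$ in terms of the convex subdifferential of $G$ and then to exploit the $y$-strong convexity quantitatively. First I would observe that, given $u\in\nabla_x^\kappa g$, a direct algebraic rearrangement shows $u+\kappa x\in\partial\tilde g(x)$; combining with $\tilde g(x)=\inf_y G(x,y)$ and uniqueness of the minimizer $\gamma(x)$, the standard envelope identification for marginals of jointly convex functions gives the functional equation
$$(u+\kappa x,\,0)\in\partial G\bigl(x,\gamma(x)\bigr).$$
This is what I would isolate in \S\ref{sec:proofargminthm}. The remaining step in \S\ref{sec:proofargmin1} would then apply strong monotonicity of $\partial F=\partial G-(0,2\sigma y)$: for any $(p_i,q_i)\in\partial G(x_i,y_i)$ one obtains
$$\langle p_1-p_2,\,x_1-x_2\rangle+\langle q_1-q_2,\,y_1-y_2\rangle\ge 2\sigma\|y_1-y_2\|^2,$$
which, specialised to our functional equation, gives $2\sigma\|\gamma(x_1)-\gamma(x_2)\|^2\le \langle u_1-u_2,\,x_1-x_2\rangle+\kappa\|x_1-x_2\|^2$. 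One AM--GM step then yields
$$\|\gamma(x_1)-\gamma(x_2)\|\le C(\sigma,\kappa)\,\bigl(\|x_1-x_2\|^2+\|u_1-u_2\|^2\bigr)^{1/2},$$
so $(x,u)\mapsto\gamma(x)$ is globally Lipschitz on the graph $\Gamma:=\{(x,u)\in\mathbb R^{2n}:u\in\nabla_x^\kappa g\}$. Kirszbraun's extension theorem then produces a Lipschitz $\phi:\mathbb R^{2n}\to\mathbb R^m$ agreeing with $\gamma$ on $\Gamma$, and its restriction to any open neighbourhood $V$ of $(x_0,u_0)$ gives (ii). Alternatively, the Lipschitz Implicit Function Theorem of Appendix \ref{sec:appendiximplicit} handles the inversion directly from the functional equation.

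Part (iii) is then a short consequence. By (i), $g$ is $\kappa$-semiconvex, hence twice differentiable at almost every $x_0\in\pi(\Omega)$ by Alexandrov's theorem. At such an $x_0$ the unique lower support vector is $u_0=\nabla g(x_0)$, and a standard fact for semiconvex functions (which I would collect in \S\ref{sec:propertiessemiconvex}) gives $\|u-u_0\|=O(\|x-x_0\|)$ for every $u\in\nabla_x^\kappa g$ with $x$ near $x_0$. Plugging this into the Lipschitz estimate from (ii) immediately yields $\|\gamma(x)-\gamma(x_0)\|=O(\|x-x_0\|)$, which is the claimed calmness.

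The main obstacle is pinning down the functional equation $(u+\kappa x,0)\in\partial G(x,\gamma(x))$ and matching it with the right monotonicity estimate: the essential subtlety is that $\gamma$ on its own need not be Lipschitz in $x$, so Lipschitz regularity can only emerge once one adjoins the subdifferential variable $u$. Once the joint reformulation is in place, the $2\sigma$-strong convexity of $G$ in $y$ does the rest via monotonicity of $\partial F$ and AM--GM.
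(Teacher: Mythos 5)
Your proposal is correct, and for the key part (ii) it takes a genuinely different route from the paper. Parts (i) and (iii) coincide with the paper's argument (marginal of a jointly convex function is convex; Alexandrov differentiability of the subdifferential of $g$ gives $\|u-u_0\|\le C\|x-x_0\|$, which is then fed into the Lipschitz dependence on $(x,u)$). For (ii), the paper also starts from the observation you call the functional equation --- namely that $u\in\nabla_x g$ and $\gamma\in\argmin_f(x)$ force $(u,0)\in\nabla_{(x,\gamma)}f$ (its Lemma on the gradient at the argmin) --- but it then packages this into an explicit Lipschitz functional $J(x,u,y)=y-\pi_2H(H_1(x+u)+u,y)$ built from the $1$-Lipschitz resolvents $H,H_1$ of $\operatorname{Id}+\nabla$, proves the fibrewise lower bound $\|J(x,u,y_1)-J(x,u,y_2)\|\ge\lambda\|y_1-y_2\|$ from strong convexity in $y$, and invokes a Lipschitz Implicit Function Theorem whose proof rests on Brouwer's invariance of domain. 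You instead apply strong monotonicity of the subdifferential directly at the two points $(x_i,\gamma(x_i))$, where the $y$-component of the subgradient vanishes, obtaining $2\sigma\|\gamma(x_1)-\gamma(x_2)\|^2\le\langle u_1-u_2,x_1-x_2\rangle+\kappa\|x_1-x_2\|^2$ and hence a Lipschitz bound for $(x,u)\mapsto\gamma(x)$ on the graph $\{(x,u):u\in\nabla_x^\kappa g\}$, extended by Kirszbraun. This is shorter, avoids the resolvent machinery and all topology, and has the side benefit of working directly on a convex $\Omega$ (monotonicity of the subdifferential needs only convexity of the domain), so you can skip the paper's preliminary reduction to $\Omega=\mathbb R^{n+m}$ via convex extension. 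What the paper's construction buys in exchange is a $\phi$ defined a priori on a full open neighbourhood of $(x_0,u_0)$ as the zero set of an explicit bi-Lipschitz map, rather than obtained by abstract extension. Two small points to make explicit in a write-up: the identification of the local lower support vectors $\nabla_x^\kappa g$ with global subgradients of $\tilde g=g+\tfrac{\kappa}{2}\|\cdot\|^2$ (true since $\tilde g$ is convex, as the paper notes), and that in (iii) the Alexandrov bound must control \emph{all} elements of $\nabla_x^\kappa g$ for $x$ near $x_0$, not just a selection --- which the set-valued form of Alexandrov's theorem quoted in the paper does provide.
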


\begin{corollary}[The Argmin is Differentiable Almost Everywhere]\label{cor:wisdiffae}
Under the hypothesis of the Theorem the  argmin function $\gamma$ is differentiable almost everywhere.
\end{corollary}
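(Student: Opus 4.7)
The plan is to derive the corollary directly from part (iii) of Theorem \ref{thm:argmin1} via Stepanov's theorem, which is the standard way to upgrade pointwise calmness almost everywhere to classical differentiability almost everywhere.

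Recall Stepanov's theorem: if $U \subset \mathbb R^n$ is open and $h : U \to \mathbb R^m$ is a function (only required to be Borel measurable, or even just defined pointwise) such that the set
$$ L(h) := \Bigl\{ x_0 \in U : \limsup_{x\to x_0} \frac{\|h(x)-h(x_0)\|}{\|x-x_0\|} < \infty \Bigr\} $$
has full measure, then $h$ is differentiable almost everywhere on $U$.

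First I would check that $\gamma$ is (at least) a Borel measurable function on $\pi(\Omega)$, so that Stepanov's theorem applies in the standard form. This is immediate from part (ii) of Theorem \ref{thm:argmin1}: the local representation $\gamma(x) = \phi(x, u)$ with $\phi$ Lipschitz, combined with the fact that $g$ is $\kappa$-semiconvex and thus differentiable almost everywhere (so that there is a natural Borel selection $u = \nabla g(x)$ defined a.e.), gives at least Borel regularity.  Alternatively one observes directly that $\gamma$ is continuous wherever $g$ is differentiable, because a small perturbation of $x$ produces a small perturbation of the minimizing $y$ via the functional representation in (ii).

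Next, part (iii) of Theorem \ref{thm:argmin1} states precisely that at almost every $x_0 \in \pi(\Omega)$ there exist constants $C$ and $\delta > 0$ such that $\|\gamma(x)-\gamma(x_0)\| \le C\|x-x_0\|$ for all $\|x-x_0\| < \delta$. This immediately implies
$$ \limsup_{x\to x_0} \frac{\|\gamma(x)-\gamma(x_0)\|}{\|x-x_0\|} \le C < \infty $$
so that $x_0 \in L(\gamma)$. Hence $L(\gamma)$ has full measure in $\pi(\Omega)$.

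Applying Stepanov's theorem to $h = \gamma$ then yields that $\gamma$ is differentiable almost everywhere on $\pi(\Omega)$, which is the statement of the corollary. There is no real obstacle here, since all of the work has been done in Theorem \ref{thm:argmin1}; the corollary is simply the observation that calmness almost everywhere plus a classical theorem of Stepanov is the cleanest way to phrase differentiability almost everywhere.
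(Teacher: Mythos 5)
Your proposal is correct and is exactly the paper's argument: the corollary follows from the almost-everywhere calmness in \eqref{eq:wlocallylipschitzae} together with Stepanov's Theorem. The extra discussion of measurability is harmless but not needed, since Stepanov's Theorem in its standard modern form applies to arbitrary functions on an open set.
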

\begin{proof}
This follows from \eqref{eq:wlocallylipschitzae} and Stepanov's Theorem \cite{Stepanoff} (see also \cite[Theorem 3.4] {Heinonen_lectures}).
\end{proof}

The strategy of the proof of Theorem \ref{thm:argmin1} is to construct a functional equation satisfied by the argmin function, and then apply the implicit function theorem for Lipschitz functions.   In the next section we setup the necessary machinery to do so.

\subsection{Properties of semiconvex functions}\label{sec:propertiessemiconvex}

We collect a few basic statements about convex and semiconvex functions.    As above $\Omega\subset \mathbb R^{n+m}$ is open, convex and $\Omega_x$ is connected for all $x$.

\begin{lemma}\label{lem:semiconvexgrad}
Suppose $f:\Omega\to \mathbb R$ and $\tilde{f}(x,y) = f(x,y) + \kappa\frac{\|x\|^2}{2}$.   Then
$$ \nabla_{(x_0,y_0)} f = \nabla_{(x_0,y_0)} \tilde{f} + \kappa x_0$$
as sets. 
\end{lemma}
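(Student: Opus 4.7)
The claimed identity is the set-theoretic analogue of the elementary fact that if $\tilde f - f = \frac{\kappa}{2}\|x\|^{2}$ then their gradients at $(x_{0},y_{0})$ differ by $(\kappa x_{0},0)$. My plan is to prove it by direct unfolding of the definition of lower support vectors, using only the polarization identity
\[
\tfrac{\kappa}{2}\|x\|^{2} - \tfrac{\kappa}{2}\|x_{0}\|^{2} \;=\; \kappa\,x_{0}\cdot(x-x_{0}) \;+\; \tfrac{\kappa}{2}\|x-x_{0}\|^{2}.
\]
This splits the perturbation $\tilde f - f$ into a constant, a linear piece (which produces the shift by $\kappa x_{0}$), and a non-negative quadratic remainder about the base point.

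Concretely, taking $u=(u_{1},u_{2})\in\nabla_{(x_{0},y_{0})}f$, so that $f(x,y)-f(x_{0},y_{0})\ge u\cdot((x,y)-(x_{0},y_{0}))$ for $(x,y)$ near $(x_{0},y_{0})$, I would add $\frac{\kappa}{2}(\|x\|^{2}-\|x_{0}\|^{2})$ to both sides. The left side becomes $\tilde f(x,y)-\tilde f(x_{0},y_{0})$; on the right, the polarization identity produces $(u_{1}+\kappa x_{0})\cdot(x-x_{0}) + u_{2}\cdot(y-y_{0}) + \frac{\kappa}{2}\|x-x_{0}\|^{2}$. Discarding the non-negative remainder exhibits $u + (\kappa x_{0},0)$ as a lower support vector for $\tilde f$ at $(x_{0},y_{0})$, establishing one set inclusion with $\kappa x_{0}\in\mathbb R^{n}$ identified with $(\kappa x_{0},0)\in\mathbb R^{n+m}$ via the natural embedding.

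The reverse inclusion is obtained by running the same computation starting from $\tilde f$ and using $f=\tilde f - \tfrac{\kappa}{2}\|x\|^{2}$. I expect this to be the only delicate step: here the polarization remainder appears with the opposite sign, so one has to check that the $O(\|x-x_{0}\|^{2})$ error it introduces does not obstruct the linear lower-support condition defining $\nabla_{(x_{0},y_{0})}f$. Apart from this sign-tracking and the canonical embedding $\kappa x_{0}\leftrightarrow(\kappa x_{0},0)$, the proof is a few lines of algebra and I foresee no further obstacle.
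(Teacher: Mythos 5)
Your first inclusion is fine, and it is presumably what the authors mean by ``immediate from the definition'': for $u\in\nabla_{(x_0,y_0)}f$ the identity $\tfrac{\kappa}{2}\|x\|^2-\tfrac{\kappa}{2}\|x_0\|^2=\kappa x_0\cdot(x-x_0)+\tfrac{\kappa}{2}\|x-x_0\|^2$ lets you discard a nonnegative remainder and conclude $u+(\kappa x_0,0)\in\nabla_{(x_0,y_0)}\tilde f$. But be careful about which inclusion this is: what you have proved is $\nabla_{(x_0,y_0)}f+\kappa x_0\subseteq\nabla_{(x_0,y_0)}\tilde f$, which is one half of the identity $\nabla_{(x_0,y_0)}\tilde f=\nabla_{(x_0,y_0)}f+\kappa x_0$, not of the identity as printed in the lemma. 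A sanity check against the smooth case (where $\nabla\tilde f=\nabla f+(\kappa x_0,0)$) confirms the shift belongs on the $\tilde f$ side, so the printed statement has the shift on the wrong side of the equality; you should flag this rather than silently prove the corrected version.

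The genuine gap is the reverse inclusion, which you rightly identify as the delicate step and then dismiss with ``I foresee no further obstacle''. The obstacle is real and fatal: starting from $v\in\nabla_{(x_0,y_0)}\tilde f$ one only gets
\[
f(x,y)-f(x_0,y_0)\;\ge\;\bigl(v-(\kappa x_0,0)\bigr)\cdot\bigl((x,y)-(x_0,y_0)\bigr)-\tfrac{\kappa}{2}\|x-x_0\|^2,
\]
and for $\kappa>0$ the negative quadratic remainder cannot in general be absorbed into a linear lower bound. The set equality is in fact false without further hypotheses: take $f(x,y)=-\tfrac{\kappa}{2}\|x\|^2+\sigma\|y\|^2$, which satisfies the standing assumptions of this section (here $\tilde f(x,y)=\sigma\|y\|^2$ and $\argmin_f\equiv\{0\}$), and $(x_0,y_0)=(0,0)$. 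Then $0\in\nabla_{(0,0)}\tilde f$, while $\nabla_{(0,0)}f=\emptyset$ because no linear function minorizes $-\tfrac{\kappa}{2}\|x\|^2$ near the origin with equality at $0$. So only the one-sided inclusion you established holds in general. The reverse inclusion can be rescued either by assuming $f$ itself is convex (then a lower support vector up to $o(\|x-x_0\|)$ is an exact one, giving the subdifferential sum rule), or by replacing $\nabla_{(x_0,y_0)}f$ with the relaxed subdifferential $\nabla^{\kappa}$ --- and it is a statement of that latter type which the paper actually uses in the reduction to $\kappa=0$ at the end of the proof of Theorem \ref{thm:argmin1}. As it stands, your argument (and the lemma as literally stated) cannot be completed.
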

\begin{proof}
This is immediate from the definition, and left to the reader.
\end{proof}

\begin{lemma}\label{lem:basicconvexityI}
Suppose $f: \Omega\to \mathbb R$ and $h:\mathbb R^m\to \mathbb R$ and set $$\hat{f}(x,y) = f(x,y) + h(x) \text{ for } (x,y)\in \Omega.$$ Then 
\begin{equation}\argmin_{\hat{f}} (x) = \argmin_{f}(x) + h(x).\label{eq:argminshift}\end{equation}
In particular $\argmin_{\hat{f}}$ is single-valued if and only if $\argmin_f$ is single-valued.
\end{lemma}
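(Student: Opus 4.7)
The plan is to derive the claim directly from the definition of $\argmin$; there are no deep ingredients involved. The observation at the heart of the proof is that for each fixed $x \in \pi(\Omega)$, the perturbation $\hat{f}(x,y) - f(x,y) = h(x)$ is independent of $y$, so on the fiber $\Omega_x$ the two functions $y \mapsto f(x,y)$ and $y \mapsto \hat{f}(x,y)$ differ by an additive constant.

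Concretely, I would fix $x \in \pi(\Omega)$ and observe that for any $y, y' \in \Omega_x$,
$$\hat{f}(x,y) \le \hat{f}(x,y') \iff f(x,y) + h(x) \le f(x,y') + h(x) \iff f(x,y) \le f(x,y'),$$
so the set of $y \in \Omega_x$ minimizing $\hat{f}(x,\cdot)$ coincides with the set of $y \in \Omega_x$ minimizing $f(x,\cdot)$. This gives the equality \eqref{eq:argminshift} as an equality of subsets of $\Omega_x$ (allowing either side to be empty), with the additive term $h(x)$ shifting only the value of the marginal infimum and not the location of its minimizers. The single-valuedness clause then transfers automatically, since the two argmin sets are literally the same subset of $\Omega_x$ and hence one is a singleton iff the other is.

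There is no real obstacle; the lemma is a pure bookkeeping step. Its role in the sequel, for instance in the proof of Theorem \ref{thm:argmin1}, is presumably to let us normalize $f$ by subtracting off (or adding on) a function of $x$ alone without altering the argmin function $\gamma$. None of the earlier material on semiconvexity or Lemma \ref{lem:semiconvexgrad} is needed here; the proof only uses the definition of $\argmin_f$.
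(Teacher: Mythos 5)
Your argument is correct and is the same one-line observation the paper intends: adding a quantity depending only on $x$ does not move the minimizers in the fiber $\Omega_x$. In fact your write-up is more careful than the paper's own. Note that $\argmin_f(x)$ is defined as a set of \emph{minimizers} $\gamma\in\Omega_x$, not of minimum values, so the literal equation \eqref{eq:argminshift} with the ``$+\,h(x)$'' shift cannot be right as stated: the correct conclusion is $\argmin_{\hat f}(x)=\argmin_f(x)$ with no shift (the shift by $h(x)$ applies to the marginal infimum, not to the argmin set). The paper's displayed proof writes $\gamma=\inf_y\hat f(x,y)$, i.e.\ it momentarily conflates the argmin with the min, which is where the spurious $+\,h(x)$ comes from; likewise $h$ should be a function on $\mathbb R^n$, not $\mathbb R^m$. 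Your version is exactly what is used later, e.g.\ in the proof of Theorem \ref{thm:argmin1}, where the conclusion drawn from this lemma is $\argmin_{\tilde f}(x)=\argmin_f(x)$. So: no gap in your proof; you have proved the statement that the paper actually needs.
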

\begin{proof}
Clearly
$$\argmin_{\hat{f}}(x) = \{ \gamma: \gamma = \inf_y \hat{f}(x,y) \} = \{ \gamma :\gamma= \inf_{y} f(x,y) + h(x)\} = \argmin_f(x) + h(x)$$
giving \eqref{eq:argminshift}.  The last statement follows immediately.  
\end{proof}

\begin{lemma}\label{lem:basicconvexityII}
Let $f:\Omega\to \mathbb R$  and suppose $f(x,y) + \frac{\kappa}{2}\|x\|^2$ is convex.   Then $g(x)  = \inf_y f(x,y)$ is $\kappa$-semiconvex.
\end{lemma}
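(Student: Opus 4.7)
The plan is to reduce to the classical fact that the marginal of a convex function is convex. Set
$$ F(x,y) := f(x,y) + \frac{\kappa}{2}\|x\|^2, $$
which by hypothesis is convex on the convex set $\Omega$. Since the $\frac{\kappa}{2}\|x\|^2$ term is independent of $y$, it pulls out of the infimum:
$$ \inf_{y\in \Omega_x} F(x,y) = \inf_{y\in \Omega_x} f(x,y) + \frac{\kappa}{2}\|x\|^2 = g(x) + \frac{\kappa}{2}\|x\|^2. $$
So the lemma will follow once we know that $x\mapsto \inf_y F(x,y)$ is convex on $\pi(\Omega)$.

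To verify this classical marginal statement, I would take any $x_0,x_1\in \pi(\Omega)$ and $\lambda\in[0,1]$, and any $y_0\in \Omega_{x_0}$, $y_1\in \Omega_{x_1}$. Convexity of $\Omega$ guarantees that
$$ \bigl((1-\lambda)x_0+\lambda x_1,\, (1-\lambda)y_0+\lambda y_1\bigr)\in \Omega, $$
and the convexity of $F$ gives
$$ F\bigl((1-\lambda)x_0+\lambda x_1,\,(1-\lambda)y_0+\lambda y_1\bigr) \le (1-\lambda)F(x_0,y_0)+\lambda F(x_1,y_1). $$
The left-hand side is bounded below by $\inf_y F((1-\lambda)x_0+\lambda x_1, y)$. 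Taking infima successively in $y_0$ and $y_1$ on the right therefore yields
$$ \inf_y F\bigl((1-\lambda)x_0+\lambda x_1, y\bigr) \le (1-\lambda)\inf_y F(x_0,y) + \lambda \inf_y F(x_1,y), $$
which is exactly convexity of $x\mapsto \inf_y F(x,y)$.

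Combined with the identity above, this shows $g(x)+\frac{\kappa}{2}\|x\|^2$ is convex, i.e.\ $g$ is $\kappa$-semiconvex, as required. There is no real obstacle here: the only minor subtlety is making sure the infimum stays finite so that $g$ is genuinely real-valued (which is part of the standing assumption that $g:\pi(\Omega)\to \mathbb R$), and checking that the convex combination stays in $\Omega$, which is immediate from convexity of $\Omega$ (the connectedness of the fibres $\Omega_x$ is not needed for this lemma).
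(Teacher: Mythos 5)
Your proof is correct and takes essentially the same route as the paper: both reduce to the classical fact that the marginal of a convex function on a convex domain is convex, via the substitution $\tilde f(x,y)=f(x,y)+\frac{\kappa}{2}\|x\|^2$. The only difference is that you spell out the standard convex-combination argument for that classical fact (correctly, including the observation that convexity of $\Omega$ alone suffices), where the paper simply invokes it.
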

\begin{proof}
Write $\tilde{f}(x,y) = f(x,y) + \frac{\kappa}{2} \|x\|^2$ so 
$$ g(x) +\frac{\kappa}{2} \|x\|^2 = \inf_y \tilde{f}(x,y)$$
which is the marginal function of convex function defined on  $\Omega$ (which is assumed to be convex and $\Omega_x$ is connected for each $x$).  Thus $g(x) + \frac{\kappa}{2} \|x\|^2$ is convex.
\end{proof}

\begin{lemma}[Gradient at argmin]\label{lem:gradientatargmax}
Suppose that $f:\Omega\to \mathbb R$ is convex and set $g(x) = \inf_y f(x,y)$.    Then  for all $x\in \pi(\Omega)$ and $\gamma\in \argmin_f(x)$
$$u\in \nabla_{x} g \Rightarrow (u,0)\in \nabla_{(x,\gamma)} f.$$
\end{lemma}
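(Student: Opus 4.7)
The statement is essentially a direct unwinding of the definitions, so my plan is to verify the desired inequality by a short chain that combines three ingredients: the trivial bound $f(x',y')\ge g(x')$, the defining property of $u\in\nabla_x g$, and the fact that $g(x)=f(x,\gamma)$ because $\gamma\in\argmin_f(x)$.

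Concretely, fix $x\in\pi(\Omega)$, $\gamma\in\argmin_f(x)$, and $u\in\nabla_x g$. By definition of $\nabla_x g$ there is a neighbourhood $U\subset\pi(\Omega)$ of $x$ such that
\[
g(x')-g(x)\ge u\cdot(x'-x)\qquad\text{for all }x'\in U.
\]
Let $W\subset\Omega$ be a neighbourhood of $(x,\gamma)$ with $\pi(W)\subset U$. For any $(x',y')\in W$ the definition of the infimum gives $f(x',y')\ge g(x')$, and by the argmin property $g(x)=f(x,\gamma)$. Chaining these,
\[
f(x',y')-f(x,\gamma)\;\ge\; g(x')-g(x)\;\ge\; u\cdot(x'-x)\;=\;u\cdot(x'-x)+0\cdot(y'-\gamma),
\]
which is exactly the statement that $(u,0)\in\nabla_{(x,\gamma)}f$.

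There is no real obstacle; the only point to keep in mind is that $u\in\nabla_x g$ implicitly requires $g$ to be finite in a neighbourhood of $x$, so that the inequality $g(x')-g(x)\ge u\cdot(x'-x)$ is meaningful, and we have to choose the neighbourhood of $(x,\gamma)$ inside $\Omega$ small enough that its projection lies in that neighbourhood of $x$. The hypothesis that $f$ is convex does not appear to enter the argument above; I would expect it is either used only implicitly (via Lemma \ref{lem:basicconvexityII} to ensure $g$ is well-behaved) or included simply because this is the setting in which the lemma will be invoked.
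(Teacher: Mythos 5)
Your proof is correct and is essentially the same as the paper's: both chain $f(x',y')\ge g(x')$, the lower-support inequality for $u\in\nabla_x g$, and $g(x)=f(x,\gamma)$ to obtain $f(x',y')-f(x,\gamma)\ge u\cdot(x'-x)=(u,0)\cdot((x',y')-(x,\gamma))$. Your observation that convexity of $f$ plays no explicit role is also consistent with the paper's argument, which likewise never invokes it.
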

\begin{proof}
Suppose $\gamma\in \argmin_f(x)$ so $g(x) = f(x,\gamma)$.  Let $u\in \nabla_{x} g$.  Then for any $(x',y')\in \mathbb R^{n}\times \mathbb R^m$,
\begin{align}
f(x',y') - f(x,\gamma)& \ge g(x') - g(x) \\ &\ge u.(x'-x) = (u,0). ((x',y') - (x,\gamma))
\end{align}
so $(u,0)\in \nabla_{(x,\gamma)}f$ as claimed.
\end{proof}

The next statement is a slight modification of \cite[Proposition 6.4]{Howard}.  
\begin{proposition}\label{prop:thefunctionsGH}
Let $\sigma>0$ and suppose $f:\mathbb R^{n+m}\to \mathbb R$ is such that $f(x,y) - \frac{\sigma}{2} \|y\|^2$ is convex.    Define the set-valued function
$$ G(p) = p + \nabla_{p} f \text{ for } p=(x,y)\in \mathbb R^{n+m}.$$
Then 
\begin{enumerate}[(i)]
\item $G$ is non-contractive.  That is, if $\zeta_i \in G(p_i)$ for $i=1,2$ then 
\begin{equation}\label{eq:Fnoncontract}\| \zeta_1-\zeta_2\| \ge \|p_1-p_2\|.\end{equation}
\item There exist a single-valued function $H:\mathbb R^{n+m} \to \mathbb R^{n+m}$ that is inverse to $G$, by which we mean
\begin{equation} H(\zeta) = p \Longleftrightarrow \zeta \in G(p).\label{eq:inverse}\end{equation}
\item The function $H$ is Lipschitz with Lipschitz constant $1$.  Moreover there is a $\mu<1$ such that letting $\pi_2:\mathbb R^{n+m}\to \mathbb R^m$ denote the second projection, 
\begin{equation}
\| \pi_2 H(\zeta_1)-\pi_2 H(\zeta_2)\| \le \mu \| \zeta_1-\zeta_2\| \text{ for all } \zeta_1,\zeta_2\in\mathbb R^{n+m}.\label{eq:inequalitylambda}
\end{equation}
\end{enumerate}
\end{proposition}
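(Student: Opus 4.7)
The plan is to reduce everything to a single strong-monotonicity estimate for the subdifferential of $f$: if $u_i\in \nabla_{p_i} f$ for $i=1,2$ with $p_i=(x_i,y_i)$, then
\begin{equation*}
(u_1-u_2)\cdot(p_1-p_2)\ \ge\ \sigma\,\|y_1-y_2\|^2.
\end{equation*}
Since $f$ is convex on all of $\mathbb R^{n+m}$, the ``local'' condition defining $\nabla_p f$ extends globally (a one-line chord-argument using convexity), so $\nabla_p f$ coincides with the classical subdifferential $\partial f(p)$. Writing $f=\tilde f+\tfrac{\sigma}{2}\|y\|^2$ with $\tilde f$ convex, the standard sum rule gives $\partial f(p)=\partial\tilde f(p)+(0,\sigma y)$; applying monotonicity of $\partial\tilde f$ and computing $((0,\sigma y_1)-(0,\sigma y_2))\cdot(p_1-p_2)=\sigma\|y_1-y_2\|^2$ yields the claim. (Alternatively one can derive it directly from the strong-convexity inequality for $\tilde f$ along the segment $[p_1,p_2]$ combined with the defining inequalities for $u_i\in\partial f(p_i)$.)

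With this in hand, (i) and the refined bound in (iii) are a one-line computation. For $\zeta_i\in G(p_i)$ set $u_i=\zeta_i-p_i\in\nabla_{p_i}f$ and expand
\begin{align*}
\|\zeta_1-\zeta_2\|^2 &= \|p_1-p_2\|^2 + 2(p_1-p_2)\cdot(u_1-u_2) + \|u_1-u_2\|^2\\
&\ge \|p_1-p_2\|^2 + 2\sigma\,\|y_1-y_2\|^2.
\end{align*}
Dropping the last summand gives the non-contractive estimate \eqref{eq:Fnoncontract}. Using instead $\|p_1-p_2\|^2\ge\|y_1-y_2\|^2$ produces $\|\zeta_1-\zeta_2\|^2\ge(1+2\sigma)\|y_1-y_2\|^2$, from which \eqref{eq:inequalitylambda} follows with $\mu=(1+2\sigma)^{-1/2}<1$.

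For (ii), single-valuedness of the prospective inverse is immediate from \eqref{eq:Fnoncontract}. For surjectivity of $G$, given $\zeta\in\mathbb R^{n+m}$ I would minimise
\begin{equation*}
\Phi_\zeta(q):=f(q)+\tfrac12\|q-\zeta\|^2.
\end{equation*}
Since $f$ is convex and finite on the whole of $\mathbb R^{n+m}$ it admits a global affine minorant, so $\Phi_\zeta$ is coercive, continuous, and strictly convex and therefore attains a unique minimum at some point $p$. The first-order optimality condition $0\in\partial\Phi_\zeta(p)=\partial f(p)+(p-\zeta)$ reads $\zeta-p\in\nabla_p f$, i.e.\ $\zeta\in G(p)$, so $H(\zeta):=p$ is the required inverse, and the Lipschitz-$1$ clause in (iii) is just a restatement of \eqref{eq:Fnoncontract}.

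The one delicate point is the subdifferential calculus invoked in the first paragraph: identifying $\nabla_p f$ with $\partial f(p)$ for convex $f$, and isolating the $(0,\sigma y)$ shift via the sum rule for $\tilde f+\frac{\sigma}{2}\|y\|^2$. Both facts are entirely classical but need to be checked once to pass from the paper's local conventions to the global statements used here; after that the whole proposition is a short Euclidean computation driven by the single strong-monotonicity inequality.
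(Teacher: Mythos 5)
Your proposal is correct, and its skeleton matches the paper's: both rest on the strong-monotonicity inequality $(u_1-u_2)\cdot(p_1-p_2)\ge\sigma\|y_1-y_2\|^2$ for $u_i\in\nabla_{p_i}f$ (which the paper derives by the same chord argument you sketch as your alternative), and both obtain surjectivity of $G$ by minimising $f(q)+\tfrac12\|q-\zeta\|^2$, checking coercivity via a global affine minorant of the convex function. The genuine difference is how you extract (i) and (iii) from the monotonicity estimate. The paper pairs the estimate with Cauchy--Schwarz to get $\|\zeta_1-\zeta_2\|\,\|p_1-p_2\|\ge\|p_1-p_2\|^2+\sigma\|y_1-y_2\|^2$, and then needs a two-case analysis (comparing $\|x_1-x_2\|$ with $\sigma\|y_1-y_2\|$) to produce a $\mu<1$. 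You instead expand $\|\zeta_1-\zeta_2\|^2=\|p_1-p_2\|^2+2(p_1-p_2)\cdot(u_1-u_2)+\|u_1-u_2\|^2$ and read off both conclusions at once: dropping the nonnegative terms gives (i), and bounding $\|p_1-p_2\|^2\ge\|y_1-y_2\|^2$ gives $\|\zeta_1-\zeta_2\|^2\ge(1+2\sigma)\|y_1-y_2\|^2$, hence $\mu=(1+2\sigma)^{-1/2}$ with no case analysis. This is shorter and yields an explicit, cleaner constant; the two values of $\mu$ differ but either suffices. The only point you flag as delicate --- that the locally defined $\nabla_p f$ agrees with the global subdifferential for convex $f$ --- is exactly the remark the paper makes after its definition of $\nabla^\kappa_{x_0}g$, so nothing is missing.
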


\begin{proof}
Let $p_i:= (x_i,y_i)\in \mathbb R^{n+m}$ for $i=1,2$.  We first claim
\begin{equation}\label{eq:thefunctionsGH1}
 (\nabla_{p_2} f- \nabla_{p_1}f ).(p_2-p_1) \ge \sigma \|y_2-y_1\|^2 \text{ for all } (x_i,y_i)\in \mathbb R^{n+m}.
 \end{equation}
 To see this, let $\tilde{f}(x,y) = f(x,y) - \frac{\sigma}{2} \|y\|^2$ which by assumption is convex and $\nabla_{(x,y)} \tilde{f} = \nabla_{(x,y)} f - (0,\sigma y)$.   Then
 \begin{equation}\label{eq:thefunctionsGH2}
 \tilde{f}(p_1) - \tilde{f}(p_2) \ge \nabla_{p_2}\tilde{f}.(p_1-p_2) = \nabla_{p_2} f.(p_1-p_2) - \sigma y_2.(y_1-y_2).
 \end{equation}
Swapping the indices we also have
 \begin{equation}\label{eq:thefunctionsGH3}
 \tilde{f}(p_2) - \tilde{f}(p_1) \ge \nabla_{p_1}\tilde{f}.(p_2-p_1) = \nabla_{p_1} f.(p_2-p_1) - \sigma y_1.(y_2-y_1).
 \end{equation}
 Adding \eqref{eq:thefunctionsGH2} and \eqref{eq:thefunctionsGH3} and rearranging gives \eqref{eq:thefunctionsGH1}.
 
 Now from Cauchy-Schwarz and \eqref{eq:thefunctionsGH1}
 \begin{align}
   \| G(p_1) - G(p_2)\| \|p_1-p_2\| &\ge  ( G(p_1) - G(p_2)).(p_1-p_2) \nonumber \\
 &= (p_1 -p_2 + \nabla_{p_1} f- \nabla_{p_2}f).(p_1-p_2) \nonumber\\
 &\ge \|p_1-p_2\|^2 + \sigma \|y_1-y_2\|^2\label{eq:strict}\\
 &\ge \|p_1-p_2\|^2 \nonumber\end{align}
which in particular implies (i).

We claim next that $G$ is surjective, by which we mean for all $\zeta\in \mathbb R^{n+m}$ there is an $p\in \mathbb R^{n+m}$ such that $\zeta\in G(p)$.    To see this let  
$$\phi(p):= \frac{1}{2} \|p\|^2 + f(p) -p.\zeta.$$
The function $p\mapsto \frac{1}{2}\|p\|^2 -p.\zeta$ is convex, and hence so is $\phi$ and
$$\nabla_{p_0} \phi = \nabla_{p_0} f+ p_0 - \zeta = G(p_0)-\zeta.$$
Similarly the function
$$\psi(p): = \frac{1}{4} \|p\|^2 + f(p)  -p.\zeta = \phi(p) - \frac{1}{4}\|p\|^2$$ is convex.  Pick $b\in \nabla_{0} \psi$ so $\psi(p) -\psi(0) \ge b.p$ giving
$$\phi(p) \ge \phi(0) + \frac{1}{4}\|p\|^2.$$
As $\phi$ is continuous this implies $\phi$ has a global minimum at  some $p_0\in \mathbb R^{n+m}$, and so $0$ is a lower support vector for $\phi$ at $0$.   Thus $0\in \nabla_{p_0}\phi = G(p_0)-\zeta$ implying that $\zeta\in G(p_0)$.  Thus $G$ is surjective as claimed.

In particular the inverse $H$ to $G$ defined by 
$$ H(\zeta) = \{ p\in \mathbb R^{n+m}: \zeta\in G(p)\}$$ 
is non-empty, and $G$ being non-contractive implies that it is single-valued.  That $H$ has Lipshitz constant $1$ follows from (i).  

Finally given $\zeta_1,\zeta_2$ set $p_i:=(x_i,y_i):= H(\zeta_i)$ so by definition $\zeta_i\in G(p_i)$ and $y_i=\pi_2 H(\zeta_i)$ .  
To ease notation let $\alpha: = \|x_1-x_2\|$ and $\beta: = \|y_1-y_2\|  = \| \pi_2 H(\zeta_1) - \pi_2 H(\zeta_2)\|$.  Then dividing \eqref{eq:strict} by $\|p_1-p_2\|$ gives
 $$ \|\zeta_1-\zeta_2\|  \ge (\alpha^2 + \beta^2)^{1/2}+ \sigma \frac{\beta^2}{(\alpha^2 + \beta^2)^{1/2}}.$$
If $\alpha \ge \sigma \beta$ then $\|\zeta_1-\zeta_2\|\ge (1+\sigma^2)^{1/2} \beta$.  If $\alpha\le \sigma \beta$ then
\begin{align*} \|\zeta_1-\zeta_2\|  &\ge \beta + \sigma \frac{\beta^2}{(\sigma^2\beta^2+ \beta^2)^{1/2}}\\
&= (1 + \frac{\sigma}{(1+\sigma^2)^{1/2}}) \beta.
\end{align*}
Hence \eqref{eq:inequalitylambda} holds with $\mu: = \min\{ (1+\sigma^2)^{1/2}, (1 + \frac{\sigma}{(1+\sigma^2)^{1/2}})\}^{-1}<1$. 
\end{proof}

We will also need the following simpler corollary (which is proved in the same way, or follows formally from Proposition \ref{prop:thefunctionsGH} upon taking $m=0$).

\begin{corollary}\label{cor:thefunctionsGH}
Suppose $g:\mathbb R^n\to \mathbb R$ is convex and define the set-valued function
$$ G_1(x) = x + \nabla_{x} g \text{ for } x\in \mathbb R^n.$$
Then 
\begin{enumerate}
\item $G_1$ is non-contractive, that is 
 \begin{equation}\|G_1(x_1) - G_1(x_2)\| \ge \|x_1 -x_2\| \text{ for all } x_1,x_2\in X.\label{eq:hefunctionsGH:Gnoncontractive:repeat}\end{equation}
\item There exist a single-valued function $H_1:\mathbb R^{n} \to \mathbb R^{n}$ that is inverse to $G_1$, and $H_1$ is Lipschitz with Lipschitz constant 1.
\end{enumerate}
\end{corollary}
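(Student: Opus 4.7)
The plan is to mirror the proof of Proposition \ref{prop:thefunctionsGH} with the $y$-variable absent, which is essentially a strict simplification (we no longer need the strongly-convex-in-$y$ hypothesis since we only require non-contraction, not the sharper contraction estimate \eqref{eq:inequalitylambda}). Alternatively one can deduce the corollary formally from Proposition \ref{prop:thefunctionsGH} by viewing $g$ as a function on $\mathbb{R}^{n+m}$ with $m=0$; in that case the strong-convexity-in-$y$ assumption becomes vacuous and the proof of (i) and (ii) of the proposition goes through verbatim, yielding (1) and the existence/Lipschitz part of (2).

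If I were to write it out directly, the key steps in order would be: First, establish monotonicity of the subdifferential of the convex function $g$, namely $(\nabla_{x_1} g - \nabla_{x_2} g)\cdot(x_1 - x_2)\ge 0$, by adding the two subgradient inequalities $g(x_2) - g(x_1) \ge u_1\cdot(x_2-x_1)$ and $g(x_1)-g(x_2)\ge u_2\cdot(x_1-x_2)$ for any choice of support vectors $u_i\in\nabla_{x_i}g$. Second, combine this with Cauchy--Schwarz applied to $(G_1(x_1)-G_1(x_2))\cdot(x_1-x_2)$ to obtain $\|G_1(x_1)-G_1(x_2)\|\,\|x_1-x_2\|\ge \|x_1-x_2\|^2$, which gives (1) and in particular implies $G_1$ has single-valued inverse wherever it is defined.

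Third, to prove surjectivity of $G_1$ (so that $H_1$ is globally defined on $\mathbb{R}^n$), fix $\zeta\in\mathbb{R}^n$ and consider
\[
\phi(x) := \tfrac{1}{2}\|x\|^2 + g(x) - x\cdot \zeta,
\]
which is convex and satisfies $\nabla_{x_0}\phi = G_1(x_0) - \zeta$. By splitting off $\tfrac{1}{4}\|x\|^2$ as in the proof of Proposition \ref{prop:thefunctionsGH}, show that $\phi(x) \ge \phi(0) + \tfrac{1}{4}\|x\|^2 + b\cdot x$ for some $b$, hence $\phi$ is coercive, continuous, and attains its global minimum at some $x_0$; then $0\in\nabla_{x_0}\phi$ forces $\zeta\in G_1(x_0)$. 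Fourth, read off from (1) that $H_1$ is $1$-Lipschitz.

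I do not expect any serious obstacle: the analogue of the subtle contraction estimate \eqref{eq:inequalitylambda} for $\pi_2\circ H$ has no counterpart here (there is no second factor), so the argument is actually easier than the one given for Proposition \ref{prop:thefunctionsGH}. The only mild point to confirm when following the ``set $m=0$'' shortcut is that the strong-convexity hypothesis on $f$ is used in the proposition only to get the strict contraction \eqref{eq:inequalitylambda}; the weaker conclusions (1) and (2) use merely convexity of $f$, so the deduction with $m=0$ is valid.
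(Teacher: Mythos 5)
Your proposal is correct and coincides with the paper's own treatment: the paper states that the corollary ``is proved in the same way, or follows formally from Proposition~\ref{prop:thefunctionsGH} upon taking $m=0$,'' which is exactly the two routes you describe. Your observation that the strong-convexity-in-$y$ hypothesis becomes vacuous when $m=0$ and is only needed for \eqref{eq:inequalitylambda} is the right justification for the shortcut.
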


\subsection{Functional Equation for argmin}\label{sec:proofargminthm}
Suppose now that $f:\mathbb R^{n+m} \to \mathbb R$ is convex and as usual let $g(x) = \inf_y f(x,y)$ which is also convex.    Consider the set-valued functions
\begin{align*} G_1(x) &= x + \nabla_x g,\\
 G(x,y) &= (x,y) + \nabla_{(x,y)} f.\end{align*}
By Proposition \ref{prop:thefunctionsGH} and Corollary \ref{cor:thefunctionsGH} these have single-valued inverses $H_1:\mathbb R^{n}\to \mathbb R^{n}$ and $H:\mathbb R^{n+m}\to \mathbb R^{n+m}$.    That is
\begin{align} H_1(u) = x & \Leftrightarrow u\in G_1(x) \text{ for } x,u\in \mathbb R^{n}\label{eq:Hinverse1}\\
H(u,v) = (x,y) &\Leftrightarrow (u,v)\in G(x,y) \text{ for } (x,y), (u,v) \in \mathbb R^{n+m}.\label{eq:Hinverse2}
\end{align}
We use these to define a functional equation for $\argmin_f$.  Let
$$J:\mathbb R^{n}\times \mathbb R^{n}\times \mathbb R^{m} \to \mathbb R^{m}$$
\begin{equation}\label{def:J}J(x,u,y) := y - \pi_2 H(H_1(x+u)+u,y).\end{equation}

\begin{proposition}[Functional Equation for argmin]\label{prop:Jargmin}
Suppose that $f(x,y)$ is convex and let $g(x) = \inf_y f(x,y)$.  
Then
$$ J(x,\nabla_x g,\argmin_f(x)) =0 \text{ for all } x\in \mathbb R^n.$$
That is,
$$ J(x,u,\gamma) =0 \text{ for all } x\in \mathbb R^n \text{ and  }\gamma\in \argmin_f(x) \text{ and }u\in \nabla_x g.$$
\end{proposition}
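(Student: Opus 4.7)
The plan is to unwind the definitions of $J$, $G_1$, $G$, $H_1$, $H$ and apply the gradient-at-argmin lemma (Lemma \ref{lem:gradientatargmax}). Fix $x\in \mathbb R^n$, let $u\in \nabla_x g$ and $\gamma\in \argmin_f(x)$. I will show each factor appearing in the formula $J(x,u,\gamma) = \gamma - \pi_2 H(H_1(x+u)+u,\gamma)$ simplifies so that the whole expression collapses to $\gamma-\gamma=0$.

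First, since $u\in \nabla_x g$, by definition of $G_1$ we have $x+u \in x+\nabla_x g = G_1(x)$. The inverse relation \eqref{eq:Hinverse1} then gives
\[ H_1(x+u) = x, \qquad \text{so} \qquad H_1(x+u)+u = x+u.\]

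Second, Lemma \ref{lem:gradientatargmax} applied to $u\in\nabla_x g$ and $\gamma\in \argmin_f(x)$ yields $(u,0)\in\nabla_{(x,\gamma)} f$. From the definition of $G$,
\[ (x,\gamma) + (u,0) = (x+u,\gamma) \in (x,\gamma) + \nabla_{(x,\gamma)} f = G(x,\gamma),\]
and the inverse relation \eqref{eq:Hinverse2} gives $H(x+u,\gamma) = (x,\gamma)$, so $\pi_2 H(x+u,\gamma) = \gamma$.

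Combining these two observations, $J(x,u,\gamma) = \gamma - \pi_2 H(x+u,\gamma) = \gamma - \gamma = 0$, as required. There is no real obstacle here: the proof is essentially a bookkeeping exercise that hinges on one substantive fact, namely the gradient-at-argmin lemma, which has already been proved. The only point one should note is that the definition of $J$ is set up precisely so that $u\in\nabla_x g$ makes the inner $H_1(x+u)+u$ collapse to $x+u$, at which point the outer $H$ encodes exactly the condition that $(u,0)$ be a lower support vector for $f$ at $(x,\gamma)$.
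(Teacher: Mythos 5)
Your proof is correct and follows exactly the same route as the paper's: use $u\in\nabla_x g$ to get $H_1(x+u)=x$, apply Lemma \ref{lem:gradientatargmax} to get $(u,0)\in\nabla_{(x,\gamma)}f$ and hence $H(x+u,\gamma)=(x,\gamma)$, and substitute into the definition of $J$. No differences worth noting.
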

\begin{proof} 
Let $x\in\mathbb R^{n}$, $\gamma\in \argmin_f(x)$ and $u\in \nabla_x g$.   Then $x+u\in G_1(x)$ so \eqref{eq:Hinverse1} gives $H_1(x+u)=x$.  On the other hand since $\gamma\in \argmin_f(x)$ we have by Lemma  \ref{lem:gradientatargmax}, $$(u,0) \in \nabla_{(x,\gamma)} f.$$
 Thus $$ (x,\gamma) + (u,0)= (u+x,\gamma)\in G(x,\gamma)$$  so \eqref{eq:Hinverse1}  gives $H(u+x,\gamma) = (x,\gamma)$.  So
\begin{align*} J(x,u,\gamma) &= \gamma - \pi_2 H(H_1(x+u)+u,\gamma)\\
&= \gamma  - \pi_2 H(x+u,\gamma) = \gamma - \pi_2(x,\gamma) \\&= 0\end{align*}
as claimed.
\end{proof}

We next collect two basic properties of $J$:

\begin{lemma}[Properties of $J$]\label{lem:propertiesJ}
The function $J$ is Lipschitz in $(x,u,y)$.    Moreover if $f(x,y) - \frac{\sigma}{2}\|y\|^2$ is convex for some $\sigma>0$ then there is a $\lambda>0$ such that for fixed $x,u$
$$ \|J(x,u,y_1) - J(x,u,y_2)\| \ge   \lambda \|y_1-y_2\| \text{ for all } y_1,y_2.$$
\end{lemma}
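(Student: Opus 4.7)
The plan is to derive both claims directly from Proposition \ref{prop:thefunctionsGH} and Corollary \ref{cor:thefunctionsGH} applied to the formula
$$ J(x,u,y) = y - \pi_2 H(H_1(x+u)+u,\, y).$$

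For the Lipschitz statement, I would observe that $H_1$ and $H$ are both Lipschitz with constant $1$ by the cited results, and $\pi_2$ is a projection (hence $1$-Lipschitz). Thus the map $(x,u)\mapsto H_1(x+u)+u$ is Lipschitz (with constant at most $3$), the map $(x,u,y)\mapsto (H_1(x+u)+u, y)$ is Lipschitz into $\mathbb R^{n+m}$, and composing with $\pi_2 H$ yields that $(x,u,y)\mapsto \pi_2 H(H_1(x+u)+u,y)$ is Lipschitz. Subtracting from the Lipschitz map $(x,u,y)\mapsto y$ gives the result.

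For the lower bound, fix $x$ and $u$ and set $w := H_1(x+u)+u \in \mathbb R^n$. Then for any $y_1,y_2$,
$$ J(x,u,y_1) - J(x,u,y_2) = (y_1-y_2) - \bigl(\pi_2 H(w,y_1) - \pi_2 H(w,y_2)\bigr).$$
The key ingredient is the strict contractivity in Proposition \ref{prop:thefunctionsGH}(iii): there exists $\mu<1$ such that
$$ \|\pi_2 H(\zeta_1)-\pi_2 H(\zeta_2)\| \le \mu\|\zeta_1-\zeta_2\|$$
for all $\zeta_1,\zeta_2$. Applied to $\zeta_i = (w,y_i)$ this gives $\|\pi_2 H(w,y_1)-\pi_2 H(w,y_2)\|\le \mu\|y_1-y_2\|$. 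The reverse triangle inequality then yields
$$ \|J(x,u,y_1)-J(x,u,y_2)\| \ge \|y_1-y_2\| - \mu\|y_1-y_2\| = (1-\mu)\|y_1-y_2\|,$$
so the lemma holds with $\lambda := 1-\mu>0$.

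There is no serious obstacle here: everything is a direct assembly of the structural facts already proved. The only subtle point is that the lower bound requires the \emph{strict} contractivity $\mu<1$ of $\pi_2\circ H$ in the $y$-direction, which is precisely what Proposition \ref{prop:thefunctionsGH}(iii) provides (and which uses the strong convexity of $f$ in $y$ encoded by $\sigma>0$). The Lipschitz upper bound, by contrast, uses only that $H_1$ and $H$ are $1$-Lipschitz.
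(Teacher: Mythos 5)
Your proposal is correct and follows exactly the paper's own argument: the Lipschitz bound comes from $H$ and $H_1$ being $1$-Lipschitz, and the lower bound comes from applying Proposition \ref{prop:thefunctionsGH}(iii) with $\zeta_i=(v,y_i)$ for the fixed first coordinate $v=H_1(x+u)+u$ and then the reverse triangle inequality, yielding $\lambda=1-\mu$. Nothing further is needed.
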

\begin{proof}
Clearly $J$ is Lipschitz in all variables since both $H$ and $H_1$ are.  For the second statement, suppose $f(x,y) - \frac{\sigma}{2}\|y\|^2$ is convex and let $\pi_2:\mathbb R^{n}\times \mathbb R^{m}\to \mathbb R^{m}$ be the second projection.  We know from Proposition \ref{prop:thefunctionsGH}(iii) that there is a $\mu<1$ such that
\begin{align}
 \| \pi_2H(v,y_1) - \pi_2H(v,y_2)\| &\le \mu  \|y_1-y_2\| \text{for all } v,y_1.\end{align}
Now fix $x,u$ and let $v:=H_1(x+u)+u$.  Then if $y_1,y_2\in \mathbb R^{m}$,
\begin{align*}
 \|J(x,u,y_2) - J(x,u,y_1) \|  &= \| y_2-y_1 - \pi_2H(v,y_2) + \pi_2 H(v,y_1)\|\\
 &\ge \| y_2 -y_1\| - \| \pi_2H(v,y_2) - \pi_2 H(v,y_1)\|\\
&\ge (1-\mu) \|y_2-y_1\|.
\end{align*}

\end{proof}

\subsection{Statement of Alexandrov's Theorem}

Let $X\subset \mathbb R^n$ be open. The following is a precise version of Alexandrov's Theorem:


\begin{theorem}[Alexandrov's Theorem]\label{thm:alexandrov:repeat}
Let $g:X\to \mathbb R$ be locally convex.   Then the set-valued function $$x\mapsto \nabla_{x} g$$ is differentiable at $x_0$ for almost all $x_0$ in $X$.   That is, for almost all $x_0$ there is an $L\in \Hom(\mathbb R^{n},\mathbb R^{m})$ such that for all $\epsilon>0$ there is a $\delta>0$ such that for $\|x-x_0\|<\delta$  we have
\begin{equation}\label{eq:twicediffgradient} \| u-u_0 - \frac{L}{2}(x-x_0)|\| \le \epsilon \|x-x_0\| \text{ for all }u\in \nabla_{x}g \text{ and } u_0\in \nabla_{x_0}g.
\end{equation}
Moreover for almost all $x_0$ the function $g$ is twice differentiable at $x_0$ and $\Hess_{x_0}(g)=L$.  That is, for any $\epsilon>0$ there is a $\delta>0$ such that 
\begin{equation}\label{eq:twicediffhessian} |g(x) - g(x_0) - \nabla g|_{x_0}.(x-x_0) - \frac{1}{2}(x-x_0)^t \Hess_{x_0}(g) (x-x_0)\rangle | \le \epsilon \|x-x_0\|^2\end{equation}
for all $\|x-x_0\|<\delta.$
\end{theorem}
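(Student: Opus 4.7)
My approach is to pull Alexandrov's theorem back along the Lipschitz inverse $H_1$ from Corollary \ref{cor:thefunctionsGH} and reduce everything to Rademacher's theorem applied to $H_1$.  Setting $h(x) := g(x) + \tfrac{1}{2}\|x\|^2$, which is strongly convex, one has $G_1 = \nabla h$ (as set-valued maps) and hence $H_1 = \nabla h^*$, where $h^*$ denotes the Legendre--Fenchel conjugate of $h$ (single-valued and $1$-Lipschitz since $h$ is strongly convex with modulus $1$).  In particular $H_1$ is itself the gradient of a convex function, so at any point $\zeta_0$ where $H_1$ is classically differentiable, the matrix $A := DH_1(\zeta_0)$ is symmetric, positive semi-definite, and has operator norm at most $1$.

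The key steps are the following.  First, Rademacher's theorem applied to $H_1$ produces a null set $N \subset \mathbb R^n$ off of which $H_1$ is differentiable; since $H_1$ is Lipschitz, $H_1(N)$ is also null.  Second, the area formula for Lipschitz maps gives
\begin{equation*}
\int_{X} \#\bigl(H_1^{-1}(x) \cap \{\det DH_1 = 0\}\bigr)\, dx \;=\; \int_{\{\det DH_1 = 0\}} |\det DH_1(\zeta)|\, d\zeta \;=\; 0,
\end{equation*}
so for a.e.\ $x_0 \in X$ no preimage $\zeta \in H_1^{-1}(x_0)$ has vanishing Jacobian.  Combining with the full-measure set on which $g$ is differentiable (so that $H_1^{-1}(x_0) = \{\zeta_0\}$ is a singleton with $\zeta_0 = x_0 + \nabla g(x_0)$), we obtain a full-measure $X_0 \subset X$ on which $A := DH_1(\zeta_0)$ exists and is invertible.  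Third, for $x$ near $x_0$ and any $u \in \nabla_x g$, set $\zeta := x + u$ so $H_1(\zeta) = x$; differentiability of $H_1$ at $\zeta_0$ yields $x - x_0 = A(\zeta - \zeta_0) + o(\|\zeta - \zeta_0\|)$, and the $1$-Lipschitz bound together with invertibility of $A$ forces $\|\zeta - \zeta_0\| = O(\|x-x_0\|)$; solving then produces $u - u_0 = (A^{-1} - I)(x - x_0) + o(\|x - x_0\|)$.  This establishes \eqref{eq:twicediffgradient} with $L := A^{-1} - I$, which is symmetric and positive semi-definite since $A$ is.

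For the second-order expansion \eqref{eq:twicediffhessian}, I would integrate the first-order expansion along the segment $[x_0, x]$: by absolute continuity of $t \mapsto g(x_0 + t(x-x_0))$ one has $g(x) - g(x_0) = \int_0^1 u_t \cdot (x - x_0)\, dt$ for any measurable selection $u_t \in \nabla_{x_0 + t(x-x_0)} g$, and substituting the uniform first-order expansion $u_t = \nabla g(x_0) + tL(x-x_0) + o(\|x-x_0\|)$ from the previous step and performing the $t$-integral yields the desired Taylor expansion with Hessian $L$.  The main obstacle is ruling out a.e.\ the degenerate case $\det A = 0$: without this, the linear relation $x - x_0 = A(\zeta - \zeta_0) + o(\|\zeta - \zeta_0\|)$ leaves the perturbation $u - u_0 = (\zeta - \zeta_0) - (x - x_0)$ uncontrolled in directions within $\ker A$, and there is no way to conclude the first-order expansion for $\nabla g$.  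Handling this is precisely where the area formula for Lipschitz maps is essential.
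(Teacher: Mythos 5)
Your proposal is a genuine proof of the theorem, whereas the paper does not prove it at all: its ``proof'' is a citation of Alexandrov's original paper and of the exposition in Howard's notes, plus a remark explaining how to localize (a locally convex $g$ is locally Lipschitz, so on small balls it extends to a convex function on all of $\mathbb R^n$, to which the global statement applies). The route you take --- pass to $h = g + \tfrac12\|\cdot\|^2$, invert the monotone map $G_1 = \mathrm{id} + \nabla g$ to get the $1$-Lipschitz single-valued $H_1 = \nabla h^*$, apply Rademacher to $H_1$, use the area formula to discard the null set of $x$ having a preimage with $\det DH_1 = 0$, invert the linearization to get the first-order expansion of the subdifferential, and integrate along segments to get the second-order expansion of $g$ --- is exactly the argument of the cited reference, and it runs on the same machinery ($G_1$, $H_1$) that the paper builds in Corollary \ref{cor:thefunctionsGH} for other purposes. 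So you are supplying, correctly in outline, the proof the paper outsources.

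Three points in your write-up need repair, none fatal. (a) The step ``the $1$-Lipschitz bound together with invertibility of $A$ forces $\|\zeta-\zeta_0\| = O(\|x-x_0\|)$'' is not justified as stated: $1$-Lipschitzness of $H_1$ gives $\|x-x_0\| \le \|\zeta-\zeta_0\|$, the \emph{wrong} direction. To extract $\|\zeta-\zeta_0\| \le C\|x-x_0\|$ from $x-x_0 = A(\zeta-\zeta_0) + o(\|\zeta-\zeta_0\|)$ you must first know $\zeta = x+u \to \zeta_0$ as $x\to x_0$, uniformly over $u\in\nabla_x g$; this follows from local boundedness of the subdifferential together with closedness of its graph and the fact that $\nabla_{x_0}g$ is a singleton, not from the Lipschitz bound. (b) Symmetry of $A = DH_1(\zeta_0)$ holds at \emph{almost every} point of differentiability of $H_1$ (the distributional derivative of the Lipschitz map $\nabla h^*$ is a symmetric $L^\infty$ matrix field agreeing a.e.\ with the pointwise derivative), not automatically at every such point; since you only need an a.e.\ statement this is harmless, but the full-measure set $X_0$ must be cut down accordingly. (c) Your argument presupposes $H_1$ is defined on all of $\mathbb R^n$, i.e.\ that $g$ is globally convex; for the locally convex $g$ of the statement you need the same extension step the paper's remark performs. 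Incidentally, your derivation yields $u - u_0 = L(x-x_0) + o(\|x-x_0\|)$ with the \emph{same} $L$ appearing in \eqref{eq:twicediffhessian}, which is consistent and indicates that the factor $\tfrac{L}{2}$ in \eqref{eq:twicediffgradient} is a typo.
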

\begin{proof}

This originates in \cite{Alexandrov} and for an exposition the reader is referred to \cite[Theorems 6.1,7.1]{Howard}.  (We remark that the latter cited work requires the function to be convex and defined on all of $\mathbb R^n$;  but the statement we want is local, and being locally convex, $g$ is also locally Lipschitz \cite{WayneState}, and so using \cite[Theorem 4.1]{MinYan} we know that $X$ is covered by small open sets $U$ such that $g|_U$ extends to a convex function on $\mathbb R^n$  so the cited work applies.)
\end{proof}

\subsection{Proof of Theorem \ref{thm:argmin1}}\label{sec:proofargmin1}


\begin{lemma}[Continuity of argmin]\label{lem:continuityargmin}
Let $\Omega\subset X\times \mathbb R$ be convex and such that $\Omega_x$ is connected for each $x\in X$.    Let $f:\Omega\to \mathbb R$ be continuous, and suppose that for each $x\in X$ the function $y\mapsto f(x,y)$ is strongly convex and attains its minimum at some point.   Then $\gamma(x) = \argmin_f(x)$ is single valued and continuous.
\end{lemma}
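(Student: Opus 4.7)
The plan is to treat the two conclusions separately. Single-valuedness is immediate: strong convexity implies strict convexity, and a strictly convex function on the connected (hence convex) subset $\Omega_x\subset \mathbb R$ admits at most one minimizer, so the assumption that the minimum is attained forces $\argmin_f(x)$ to be a singleton $\gamma(x)$.

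For continuity at a point $x_0\in X$, write $\gamma_0:=\gamma(x_0)$ and fix $\epsilon>0$ small enough that both $\gamma_0-\epsilon$ and $\gamma_0+\epsilon$ lie in $\Omega_{x_0}$. Strict convexity of $y\mapsto f(x_0,y)$ together with the fact that $\gamma_0$ is its unique minimizer yields a positive ``barrier gap''
\[
  \eta := \min\{ f(x_0,\gamma_0-\epsilon),\, f(x_0,\gamma_0+\epsilon)\} - f(x_0,\gamma_0) > 0.
\]
Since $\Omega$ is open and $f$ is continuous, there is a neighbourhood $U$ of $x_0$ such that, for every $x\in U$, the points $(x,\gamma_0-\epsilon)$, $(x,\gamma_0)$ and $(x,\gamma_0+\epsilon)$ all lie in $\Omega$ and
\[
  f(x,\gamma_0\pm\epsilon) > f(x,\gamma_0) + \eta/2.
\]

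I then claim $|\gamma(x)-\gamma_0|\le \epsilon$ for all $x\in U$. Indeed, if one had (say) $\gamma(x)>\gamma_0+\epsilon$, then since $\Omega_x\subset \mathbb R$ is connected it contains the whole segment from $\gamma_0$ to $\gamma(x)$, so $\gamma_0+\epsilon$ is a strict convex combination of $\gamma_0$ and $\gamma(x)$. Convexity of $y\mapsto f(x,y)$ would then give
\[
  f(x,\gamma_0+\epsilon)\le \max\{f(x,\gamma_0),\,f(x,\gamma(x))\} = f(x,\gamma_0),
\]
the last equality using that $\gamma(x)$ is the minimizer on $\Omega_x$. This contradicts the strict inequality above, so $\gamma(x)\in[\gamma_0-\epsilon,\gamma_0+\epsilon]$, which is exactly continuity of $\gamma$ at $x_0$.

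The argument is essentially a quantitative version of the elementary fact that the minimum of a strictly convex function on an interval is separated from its values at two flanking points, propagated from $x_0$ to nearby $x$ by uniform continuity of $f$ on a compact neighbourhood. The main obstacle I foresee is not analytic but bookkeeping: one must ensure that the auxiliary points $\gamma_0\pm\epsilon$ remain admissible as $x$ varies (which uses openness of $\Omega$) and that $\Omega_x$ contains the full segment between $\gamma_0$ and $\gamma(x)$ (which uses connectedness of each slice). Once those membership issues are handled the inequality chasing is routine, and no appeal to the strong convexity constant is needed beyond strictness.
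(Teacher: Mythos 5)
Your proof is correct, and its core mechanism --- trap $\gamma(x)$ between two flanking points where $f(x,\cdot)$ is provably larger than at $\gamma_0$, propagating the gap from $x_0$ to nearby $x$ by continuity and using convexity of $y\mapsto f(x,y)$ to forbid the minimizer from escaping --- is exactly the paper's first step, where it is used to show $\gamma$ is locally bounded with a \emph{fixed} trapping radius $c$. The difference is in how you finish: the paper keeps $c$ fixed and then runs a separate sequential argument (bounded sequence $\gamma(x_n)$, cluster point $b$, pass to the limit in $f(x_{n_r},\gamma(x_{n_r}))\le f(x_{n_r},y)$ to identify $b=\gamma(x)$), whereas you observe that the same trapping argument with an \emph{arbitrarily small} radius $\epsilon$ already yields $|\gamma(x)-\gamma_0|\le\epsilon$, i.e.\ continuity directly, making the cluster-point step unnecessary. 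Your version is shorter and, as you note, uses only strict convexity plus attainment rather than strong convexity. One caveat worth recording: your two-point barrier $\{\gamma_0-\epsilon,\gamma_0+\epsilon\}$ is specific to one-dimensional fibres, which matches the lemma as literally stated ($\Omega\subset X\times\mathbb R$), but the lemma is later invoked with fibres in $\mathbb R^m$; there you should replace the two flanking points by the sphere $\|y-\gamma_0\|=\epsilon$ and use its compactness to get a uniform gap $\eta>0$, after which the convex-combination argument goes through unchanged (this is the form the paper's own proof implicitly takes).
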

\begin{proof}
For fixed $x$ the hypothesis imply that $y\mapsto f(x,y)$ is a strongly convex function on the connected set $\Omega_x$ that attains its minimum, and thus this minimum $\gamma(x)$ must be a unique.  We first claim that $\gamma$ is locally bounded.  Fix $x_0\in X$ and let $a:= \gamma(x_0)$.  Then by strong convexity there is an $\epsilon>0$ and $c>0$ such that $f(x_0,y)>a+\epsilon$ if $\|y-\gamma(x_0)\|\ge c$.   By continuity we may take $\delta>0$ small so if $\|x-x_0\|<\delta$ and $\|y-\gamma(x_0)\|=c$ then $f(x,y)>a+\epsilon$ and, and furthermore that $f(x,\gamma(x_0))<a+\epsilon$.  But by strict convexity of $y\mapsto f(x,y)$ this implies $\gamma(x) \in [\gamma(x_0)-c,\gamma(x_0)+c]$ for all $\|x-x_0\|<\delta$, and thus $\gamma$ is locally bounded.

Now suppose $(x_n)$ is a sequence in $X$ converging to $x$ as $n\to \infty$.   By the above we may assume $S:=\{\gamma(x_n)\}$ is bounded.   Let $b$ be a cluster point of $S$, so there is a subsequence $x_{n_r}$ with $\gamma(x_{n_r})\to b$ as $r\to \infty$.  By continuity of $f$ for any $y\in \mathbb R^m$,
$$ f(x,b)= \lim_{r\to \infty} f(x_{n_r},\gamma(x_{n_r})) \le \lim_{r\to \infty} f(x_{n_r}, y) = f(x,y).$$
Hence $b=\gamma(x)$.  As this holds for all cluster points of $S$ we deduce $\gamma(x_n)\to \gamma(x)$ as $n\to \infty$, proving continuity of $\gamma$.
\end{proof}

\begin{proof}[Proof of Theorem \ref{thm:argmin1}]
We first claim that there is no loss in generality in assuming that $\Omega= \mathbb R^{n+m}$.    To see this, suppose $f:\Omega\to \mathbb R$ has properties \eqref{eq:condagmin1} and \eqref{eq:condagmin2}.   Then $\gamma=\argmin_f$ is single-valued and continuous (Lemma \ref{lem:continuityargmin}).   So given $x_0\in \pi(\Omega)$ there are small balls $x_0\in U\subset \pi(\Omega)$ and $\gamma(x_0)\in V\subset \mathbb R^m$ so that $U\times V\subset \Omega$ and $\gamma(U)\subset V$.  Moreover as $f$ is semiconvex, by shrinking $U,V$ we may assume that $f|_{U\times V}$ is Lipschitz (all convex functions are Lipschitz, see e.g.\ \cite{WayneState}).    Let $\tilde{f}(x,y) : = f(x,y) + \frac{\kappa}{2} \|x\|^2 - \frac{\sigma}{2} \|y\|^2$ which we are assuming is convex on $\Omega$.  Then \cite[Theorem 4.1]{MinYan} we know $\tilde{f}|_{U\times V}$ extends to a convex function $\tilde{h}$ on all of $\mathbb R^{n+m}$.  Now let $$h(x,y): = \tilde{h}(x,y) - \frac{\kappa}{2}\|x\|^2 + \frac{\sigma}{2} \|y\|^2.$$
For fixed $x$ the convex function $y\mapsto h(x,y)$ agrees with the function $y\mapsto f(x,y)$ when $y\in V$.  Since $V$ contains $\gamma(x)=\argmin_f(x)$, this implies $\argmin_h(x) = \argmin_f(x)=\gamma(x)$.   Hence $h$ satisfies the hypothesis of the Theorem with $\Omega = \mathbb R^{n+m}$, and so $\gamma|_U$ has the properties in the conclusion of the theorem (which are all local), which proves the claim.

So from now on assume $f:\mathbb R^{n+m} \to \mathbb R$ satisfies \eqref{eq:condagmin1} and \eqref{eq:condagmin2}.    Consider first the case $\kappa=0$, so $(x,y)\mapsto f(x,y) -\frac{\sigma}{2} \|y\|^2$ is convex.  Then in particular $f$ is convex, and so $g(x) = \inf_y f(x,y)$ is also convex.  Moreover for fixed $x$ the function $y\mapsto f(x,y)$ is strictly convex, and so $\argmin_f$ (which is assumed to be non-empty) must be single valued.  Consider the functional $J$ from \eqref{def:J} so by Proposition \ref{prop:Jargmin}
\begin{equation}J(x,u,\gamma(x)) =0  \text{ for all } x \text{ and } u\in \nabla_x g.\label{eq:functionalJrepeat}\end{equation}
Fix $x_0\in \mathbb R^n$ and $u_0\in \nabla_{x_0}g$ so $J(x_0,u_0,\gamma(x_0)) =0$.  The properties of $J$ proved in Lemma \ref{lem:propertiesJ} mean we can apply the Inverse-function Theorem for Lipschitz maps (for convenience of the reader we give a proof of this in Appendix \ref{sec:appendiximplicit}, and apply it here with $r$ replaced with $2n$ and $s$ replaced with $m$).    This yields a Lipschitz function $\phi:V\to \mathbb R^{m}$ defined on a neighbourhood $V$ of $(x_0,u_0)$ such that
$$ J(x,u,y)=0 \Leftrightarrow y = \phi(x,u).$$
This combined with \eqref{eq:functionalJrepeat} gives
$$\gamma(x) = \phi(x,u) \text{ for all } (x,u)\in V \text{ with } u\in \nabla_x g.$$

We next prove $\gamma$ is calm almost everywhere.   As $g$ is convex we have by Alexandrov's Theorem \eqref{eq:twicediffgradient} that for almost all $x_0$ there are $\delta_1>0$ and linear $L:\mathbb R^{n}\to \mathbb R^{n}$ such that for $\|x-x_0\|<\delta_1$ 
\begin{equation}\| u  - u_{0} \| \le (1 + \|L\|) \|x-x_0\| \text{ for all } u\in \nabla_{x}g \text{ and } u_0\in \nabla_{x_0} g.\label{eq:argmax2eq1}
\end{equation}

Pick $u_0\in \nabla_{x_0} g$, and let $\phi:V\to \mathbb R$ be the Lipschitz function constructed above.  For concreteness say that $V$ contains the set $\|x-x_0\|<\delta_2$ and $\|u-u_0\|<\delta_2$ and that $\phi$ has Lipschitz constant $C'$ there.  Thus
$$ \gamma(x) = \phi(x,u) \text{ for }  \|x-x_0\|<\delta_2, \|u-u_0\|<\delta_2 \text{ and } u\in \nabla_x g.$$  
Set $$\delta :=\min\{\delta_1, \frac{\delta_2}{1+\|L\|}\}$$ and suppose $\|x-x_0\|<\delta$.  Picking any $u\in \nabla_x g$, by \eqref{eq:argmax2eq1} $\|u-u_0\|<\delta_2$ and so
$$ \|\gamma(x)  -\gamma(x_0)\|  = \|\phi(x,u) - \phi(x_0,u_0)\| \le C' (\|x-x_0\| + \|u-u_0\|) \le C'(2 + \|L\|) \|x-x_0\|.$$
Thus $\gamma$ is calm $x_0$.

The case of general $\kappa$ is easily reduced to the case $\kappa=0$.  For suppose $f(x,y) + \frac{\kappa}{2} \|x\|^2  -\frac{\sigma}{2}|y|^2$ is convex and $\argmin_f$ is single-valued.     Set
$$\tilde{f}(x,y) = f(x,y) + \frac{\kappa}{2} \|x\|^2$$
Then  $\tilde{f}(x,y) - \frac{\sigma}{2} \|y\|^2$ is convex, and by \eqref{eq:argminshift}
$$\argmin_{\tilde{f}}(x) = \argmin_f (x).$$
Thus $\argmin_{\tilde{f}}$ is also single-valued, so by the above the Theorem can be applied to $\tilde{f}$.  Let
$$ \gamma(x) := \argmin_{\tilde{f}}(x) = \argmin_f (x)$$ 
Setting $\tilde{g}(x):=\inf_y \tilde{f}(x,y)$, given $x_0$ and $u_0\in \nabla_{x_0} \tilde{g}$ we know that there is a locally Lipschitz function $\tilde{\phi}:\tilde{V}\to \mathbb R$ defined on a neighbourhood $\tilde{V}$ of $(x_0,u_0)$ such that
$$ \gamma(x) = \tilde{\phi}(x,u) \text{ for } (x,u)\in \tilde{V} \text{ with } u\in \nabla_x \tilde{g}.$$
Set $\phi(x,u) = \tilde{\phi}(x,u+\kappa x)$ which is locally Lipschitz around $(x_0,u_0 + \kappa x_0)$.  And if $u\in \nabla^{\kappa}g$ then $u-\kappa x_0 \in \nabla_x\tilde{g}$ so $\gamma(x) = \tilde{\phi}(x,u-\kappa x_0)  = \phi(x,u)$.      Thus the conclusion of the Theorem also holds for $f$ and we are done.
\end{proof}

\section{F-subharmonic functions}\label{sec:Fsubharmonics}
\subsection{Basic definitions}\label{sec:basics} We summarise some basic properties of F-subharmonic functions from the work of Harvey-Lawson.  We refer the reader to \cite{MP1} for a more detailed summary, or the original papers \cite{HL_Dirichletduality,HL_Dirichletdualitymanifolds}.  Let $X\subset \mathbb R^{n}$ be open and 
$$J^2(X):=X\times \mathbb R\times \mathbb R^n \times \Sym^2_{n} = X\times J^2_n$$
be the jet-bundle over $X$.  For $F\subset J^2(X)$ and $x\in X$ we write 
$$ F_x= \{ (r,p,A)\in J^2_n  : (x,r,p,A)\in F\}.$$

\begin{definition}[Primitive Subequations]
We say that $F\subset J^2(X)$ is a \emph{primitive subequation} if 
\begin{enumerate}
\item (Closedness) $F$ is closed.
\item (Positivity) 
\begin{equation}\label{eq:positivity} 
(r,p,A)\in F_x \text{ and } P\in \Pos_{n} \Rightarrow (r,p,A+P)\in F_x.\end{equation}
\end{enumerate}
We say that $  F\subset J^2(X)$ has the \emph{Negativity Property} if
\begin{enumerate}\setcounter{enumi}{2}
\item (Negativity) \begin{equation}\label{eq:negativity} 
(r,p,A)\in F_x \text{ and } r'\le r \Rightarrow (r',p,A)\in F_x.\end{equation}
\end{enumerate}
\end{definition}


\begin{definition}[Upper contact points, Upper contact jets]
Let $$f: X\to \mathbb R\cup\{-\infty\}.$$ We say that $x\in X$ is an \emph{upper contact point} of $f$ if $f(x)\neq -\infty$ and there exists $(p,A)\in \mathbb R^n\times \Sym^2_n$ such that
$$f(y)\le f(x) + p.(y-x) + \frac{1}{2} (y-x)^tA(y-x) \text{ for all } y \text{ sufficiently near } x.$$
When this holds we refer to both $(f(x),p,A)$ and $(p,A)$ as an \emph{upper contact jet} of $f$ at $x$.
\end{definition}

\begin{definition}[F-subharmonic function]
Suppose $F\subset J^2(X)$.   We say that an upper-semicontinuous function $f:X\to \mathbb R\cup \{-\infty\}$ is \emph{F-subharmonic} if $$ (f(x),p,A)\in F_x \text{ for all upper contact jets }  (p,A) \text{ of }f \text{ at } x.$$
We let $F(X)$ denote the set of $F$-subharmonic functions on $X$.
\end{definition}

Clearly being $F$-subharmonic is a local condition on $X$.

\begin{proposition}\label{prop:basicproperties}
Let $F\subset J^2(X)$ be closed.  Then 
\begin{enumerate}
\item (Maximum Property) If $f,g\in F(X)$ then $\max\{f,g\}\in F(X)$.
\item (Decreasing Sequences) If $f_j$ is decreasing sequence of functions in $F(X)$ (so $f_{j+1}\le f_j$ over $X$) then $f:=\lim_j f_j$ is in $F(X)$.
\item (Uniform limits) If $f_j$ is a sequence of functions on $F(X)$ that converge locally uniformly to $f$ then $f\in F(X)$.
\item (Families locally bounded above) Suppose $\mathcal F\subset F(X)$ is a family of $F$-subharmonic functions locally uniformally bounded from above.  Then the upper-semicontinuous regularisation of the supremum
$$ f:= {\sup}^*_{f\in \mathcal F} f$$
is in $F(X)$.
\item If $F$ is constant coefficient and $f$ is $F$-subharmonic on $X$ and $x_0\in \mathbb R^{n}$ is fixed,  then the function $x\mapsto f(x-x_0)$ is $F$-subharmonic on $X-x_0$.
\end{enumerate}
\end{proposition}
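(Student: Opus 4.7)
Items (1) and (5) of Proposition \ref{prop:basicproperties} will follow directly from unpacking the definition of upper contact jet, while (2), (3) and (4) all rely on a common ``strict upper contact, max-attainment, closedness'' perturbation technique that I will describe once and then indicate how to adapt to each setting.

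For (1), given an upper contact jet $(p,A)$ of $h:=\max\{f,g\}$ at $x$, either $h(x)=f(x)$ or $h(x)=g(x)$; in the first case the inequality $f(y)\le h(y)\le h(x)+p\cdot(y-x)+\tfrac{1}{2}(y-x)^tA(y-x)$ valid for $y$ near $x$ shows $(p,A)$ is also an upper contact jet of $f$ at $x$, so $F$-subharmonicity of $f$ gives $(h(x),p,A)=(f(x),p,A)\in F_x$. The other case is symmetric. For (5), the upper contact jets of $\tilde f(x):=f(x-x_0)$ at $x$ are precisely the upper contact jets of $f$ at $x-x_0$; since $F$ is constant coefficient, $F_x=F_{x-x_0}$, so membership is preserved.

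For (2) and (3), the plan is as follows. Suppose $(p,A)$ is an upper contact jet of the limit $f$ at $x_0$, with $f(x_0)\ne-\infty$. Fix $\epsilon>0$ and set $Q_\epsilon(y):=p\cdot(y-x_0)+\tfrac{1}{2}(y-x_0)^t(A+\epsilon I)(y-x_0)$, so that on a small closed ball $\bar B_r(x_0)$ the function $f-Q_\epsilon$ attains its strict maximum $f(x_0)$ only at $x_0$. Let $x_j\in\bar B_r(x_0)$ be a maximiser of the USC function $f_j-Q_\epsilon$ on $\bar B_r(x_0)$. Using either Dini-type monotonicity for (2) or uniform convergence for (3), one shows $f_j(x_j)\to f(x_0)$ and $x_j\to x_0$; in particular, $x_j$ is eventually interior. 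A direct quadratic expansion of $Q_\epsilon$ at $x_j$ then exhibits
\[
\bigl(p+(A+\epsilon I)(x_j-x_0),\; A+\epsilon I\bigr)
\]
as an upper contact jet of $f_j$ at $x_j$, so $F$-subharmonicity of $f_j$ places $\bigl(f_j(x_j),\,p+(A+\epsilon I)(x_j-x_0),\,A+\epsilon I\bigr)\in F_{x_j}$. Letting $j\to\infty$ and using closedness of $F$ gives $(f(x_0),p,A+\epsilon I)\in F_{x_0}$, and finally letting $\epsilon\to 0$ and again invoking closedness yields $(f(x_0),p,A)\in F_{x_0}$, as required.

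For (4), I would first invoke Choquet's lemma to replace $\mathcal F$ by a countable subfamily $\{g_n\}\subset\mathcal F$ with $(\sup_n g_n)^*=f$. Setting $f_n:=\max\{g_1,\dots,g_n\}$, part (1) gives $f_n\in F(X)$, and $f_n$ is increasing with $(\sup_n f_n)^*=f$. The same perturbation scheme as above then applies, the only subtlety being that the comparison at the maximiser uses the identity $f=(\sup_n f_n)^*$ together with the strict nature of the upper contact jet $(p,A+\epsilon I)$: because $x_0$ is a strict maximum of $f-Q_\epsilon$ on $\bar B_r(x_0)$ and $f_n\nearrow\sup_n f_n\le f$ with USC envelope equal to $f$, the maximisers $x_n$ of $f_n-Q_\epsilon$ must still cluster at $x_0$ with $f_n(x_n)\to f(x_0)$. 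The rest of the argument proceeds verbatim. This last step is the main obstacle, since the USC regularisation means $f$ can strictly exceed $\sup_n f_n$ on a small set, and one must carefully verify that the maximisers of $f_n-Q_\epsilon$ on $\bar B_r(x_0)$ still converge to $x_0$ despite this defect; the strict inequality in the upper contact jet, combined with USC of $f$, provides exactly the cushion needed.
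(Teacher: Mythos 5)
Your proof is correct: the paper itself gives no argument for (1)--(4), simply deferring to Harvey--Lawson's Theorem 2.6, and your perturbation scheme (pass to the strict contact jet $A+\epsilon I$, take maximisers $x_j$ of $f_j-Q_\epsilon$ on a closed ball, show $x_j\to x_0$ and $f_j(x_j)\to f(x_0)$, then use closedness of $F$ twice) is precisely the standard argument used there. The one step in (4) worth writing out fully is the lower bound $\liminf_n\max_{\bar B_r(x_0)}(f_n-Q_\epsilon)\ge f(x_0)$, which requires choosing $y_k\to x_0$ with $\sup_n f_n(y_k)\to f(x_0)$ (from the definition of the upper-semicontinuous regularisation) and then a diagonal choice of indices $n_k$; this is exactly the ``cushion'' you identify, and it does go through.
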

\begin{proof}
See \cite[Theorem 2.6]{HL_Dirichletdualitymanifolds} for (1-4).  Item (5) is immediate.
\end{proof}

\begin{definition}
Let $F\subset J^2(X)$.  
\begin{enumerate}
\item We say $F$ is \emph{constant coefficient} if $F_x$ is independent of $x$, i.e.\
$$ (x,r,p,A) \in F_x \Leftrightarrow (x',r,p,A)\in F_{x'} \text{ for all }x,x',r,p,A.$$
\item We say $F$ \emph{depends only on the Hessian part} if each $F_x$ is independent of $(r,p)$,  i.e.
$$ (r,p,A) \in F_x \Leftrightarrow (r',p',A)\in F_{x} \text{ for all }x,r,r',p,p',A.$$
\end{enumerate}
\end{definition}

An important example is $$\calPos:= \{ (x,r,p,A)\in J^2(X): A \text{ is semipositive}\}$$ which is a constant-coefficient primitive subequation that depends only on the Hessian part.  Then $\calPos$-subharmonic functions are precisely those that are locally convex  \cite[Example 14.2]{HL_Dirichletdualitymanifolds}.

\begin{lemma}[Sums of $F$-subharmonic and convex functions]\label{lem:sumsubharmoniconvex}
Suppose $F\subset J^2(X)$ is a constant coefficient primitive subequation that depends only on the Hessian part.  If $f$ is $F$-subharmonic on $X$ and $g$ is a convex quadratic function on $X$, then $f+g$ is $F$-subharmonic.
\end{lemma}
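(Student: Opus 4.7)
The plan is to verify $F$-subharmonicity of $f+g$ directly from the definition, by analyzing its upper contact jets. First I would fix $x_0\in X$ and take an arbitrary upper contact jet $(p',A')$ of $f+g$ at $x_0$. Because $g$ is a quadratic function, its second-order Taylor expansion at $x_0$ terminates and is exact:
\[ g(y) \;=\; g(x_0) + \nabla g(x_0)\cdot(y-x_0) + \tfrac{1}{2}(y-x_0)^t \Hess(g)(y-x_0). \]
Substituting this identity into the upper contact inequality for $f+g$ and rearranging, I would read off that $(p' - \nabla g(x_0),\, A' - \Hess(g))$ is an upper contact jet of $f$ at $x_0$.

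The next step is to apply $F$-subharmonicity of $f$ to conclude that this shifted jet lies in $F_{x_0}$. Since $F$ is both constant coefficient and depends only on the Hessian part, membership in $F$ reduces to a condition $A\in\mathcal A$ on the Hessian component alone, for some fixed set $\mathcal A\subset \Sym^2_n$. Thus $A'-\Hess(g)\in \mathcal A$. Since $g$ is convex we have $\Hess(g)\in \Pos_n$, so the positivity property \eqref{eq:positivity} (applied to $\mathcal A$) upgrades this to
\[ A' \;=\; \bigl(A'-\Hess(g)\bigr) + \Hess(g) \;\in\; \mathcal A. \]
Invoking the Hessian-only property of $F$ once more, the original jet $(f(x_0)+g(x_0),p',A')$ lies in $F_{x_0}$, which is exactly what is required for $F$-subharmonicity of $f+g$ at $x_0$.

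There is no serious obstacle here: the crucial structural point is that the second-order Taylor expansion of a quadratic function is exact, so upper contact jets of $f$ and $f+g$ are related by an honest equality rather than a Taylor approximation. Convexity of $g$ then enters only through the sign of $\Hess(g)$, which is absorbed by the positivity axiom of $F$; the constant-coefficient and Hessian-only hypotheses are what let the $(r,p)$ components of the jet be ignored throughout.
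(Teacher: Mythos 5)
Your argument is correct and is essentially the paper's own proof: the paper likewise writes $g(x)=a+b\cdot x+\tfrac12 x^tCx$ with $C$ semipositive, observes that an upper contact jet $(p,A)$ of $f+g$ shifts to the upper contact jet $(p-b-Cx,\,A-C)$ of $f$ (exact because $g$ is quadratic), and then uses the Hessian-only dependence together with the Positivity property to put the original jet back into $F$. No substantive difference.
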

\begin{proof}
The hypothesis is that $g(x) = a + b.x + \frac{1}{2} x^tC x$ for some $a,b\in \mathbb R^n$ and some semipositive symmetric matrix $C$.  One can check that if $(p,A)$ is an upper-contact point of $f+g$ at $x$ then $(x^tC +p-b, A-C)$ is an upper-contact jet for $f$ at $x$.  As $f$ is $F$-subharmonic this implies $(f(x), x^tC +p-b, A-C)\in F$.  Since $F$ depends only on the Hessian part, and satisfies the Positivity property, this in turn implies $(f(x)+g(x),p,A)\in F$ proving that $f+g$ is $F$-subharmonic as required.
\end{proof}

\subsection{Product Subequations}\label{sec:productsubequations}

For $\Gamma\in \Hom(\mathbb R^{n},\mathbb R^{m})=M_{m\times n}(\mathbb R)$ consider
\begin{align}i_\Gamma:\mathbb R^{n} &\to \mathbb R^{n+m} \quad i_\Gamma(x) = (x,\Gamma x)\\
j : \mathbb R^{m} &\to \mathbb R^{n + m} \quad j(y) = (0,y).\end{align}
which induce natural pullback maps
\begin{equation}
i_\Gamma^*: J^2_{n+m}\to J^2_{n} \text{ and }
j^* : J^2_{n+m}\to J^2_{m}.\end{equation}
We can write these explicitly.  Suppose
$$ p := \left(\begin{array}{c} p_1 \\ p_2 \end{array} \right)\in \mathbb R^{n+ m} \text{ and } A := \left(\begin{array}{cc} B & C \\ C^t & D \end{array}\right)\in \Sym^2_{n+m}$$
where the latter is in block form, so $B\in \Sym^2_{n}$ and $D\in \Sym^2_{m}$.  Then
\begin{align}\label{eq:defofistar}
i_\Gamma^*(r,p,A) &= \left(r, p_1 + \Gamma^t p_2, B + C\Gamma + \Gamma^t C^t + \Gamma^tD\Gamma\right)\\
j^*(r,p,A)  &= (r,p_2,D).
\end{align}

\begin{definition}[Products]
Let $X\subset\mathbb R^n$ and $Y\subset \mathbb R^m$ be open, and $F\subset J^2(X)$ and $G\subset J^2(Y)$. Define $$F\#G \subset J^2(X\times Y)$$ by
$$(F\#G)_{(x,y)}  = \left \{ \alpha\in J^2_{n+m} : \begin{array}{l}   i_{\Gamma}^*\alpha \in F_{x}  \text{ and }  j^*\alpha \in G_y  \\ \text{for all }  \Gamma\in \Hom(\mathbb R^{n},\mathbb R^{m})\end{array}\right\}.$$
\end{definition}

\begin{lemma}
\begin{enumerate}
\item If $F$ and $G$ are primitive subequations then so is $F\#G$.  Moreover if $F$ and $G$ both have the Negativity Property then so does $F\#G$. 
\item Let $F$ be a constant-coefficient primitive subequation on $X$.  Suppose and $f$ is $F\#\calPos$-subharmonic on some open $\Omega\subset X\times Y$.  The for each $x\in X$ the function $y\mapsto f(x,y)$ is locally convex.
\end{enumerate}
\end{lemma}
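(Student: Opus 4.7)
Part (1) is formal. By construction
$$ F\#G \;=\; \bigcap_{\Gamma}(i_\Gamma^*)^{-1}(F)\;\cap\;(j^*)^{-1}(G), $$
an intersection of preimages under the continuous (linear) maps $i_\Gamma^*$ and $j^*$, so closedness of $F$ and $G$ transfers at once to $F\#G$. For positivity, write $P\in\Pos_{n+m}$ in block form $\begin{pmatrix} B' & C' \\ (C')^t & D' \end{pmatrix}$; then $j^*(0,0,P)=(0,0,D')$ has semipositive Hessian part (a diagonal block of a semipositive matrix is semipositive), while by \eqref{eq:defofistar}
$$ i_\Gamma^*(0,0,P) \;=\; \left(0,\,0,\,\begin{pmatrix}I & \Gamma^t\end{pmatrix}P\begin{pmatrix}I\\ \Gamma\end{pmatrix}\right) $$
is also semipositive as a congruence of $P$. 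Thus positivity of $F$ and $G$ implies positivity of $F\#G$. Negativity is immediate because both pullbacks fix the $r$-coordinate.

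For Part (2), fix $x_0\in X$ and let $y_0\in\Omega_{x_0}$ be an upper contact point of $f_{x_0}(y):=f(x_0,y)$ with upper contact jet $(p_2,D)$; the task is to show $D\in\Pos_m$. The obstacle is that $(p_2,D)$ is only a contact jet along the slice $\{x=x_0\}$, whereas $F\#\calPos$-subharmonicity of $f$ only sees upper contact jets of $f$ itself. The plan is to use a Crandall--Ishii-style penalization to produce honest upper contact jets of $f$ at points near $(x_0,y_0)$ whose $y$-Hessian block is close to $D$.

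Specifically, for $\epsilon>0$ and $C>0$ I would consider the test function
$$ \Phi_C(x,y) \;=\; p_2\cdot(y-y_0) + \tfrac{1}{2}(y-y_0)^t D(y-y_0) + \epsilon\|y-y_0\|^2 + \tfrac{C}{2}\|x-x_0\|^2 $$
on a small closed ball $K\subset\Omega$ around $(x_0,y_0)$. Since $f-\Phi_C$ is USC on the compact set $K$, it attains its maximum at some $(x_C,y_C)\in K$. I would then show $(x_C,y_C)\to(x_0,y_0)$ as $C\to\infty$: the $x$-penalty forces $x_C\to x_0$, and then USC of $f$ combined with the upper contact inequality on the slice and the extra $\epsilon$-term forces every subsequential limit $y^*$ of $(y_C)$ to satisfy $\epsilon\|y^*-y_0\|^2\le 0$, and hence $y^*=y_0$.

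Once $(x_C,y_C)$ lies in the interior of $K$ (true for $C$ large), it is a local maximum of $f-\Phi_C$, so $(\nabla\Phi_C,\Hess\Phi_C)|_{(x_C,y_C)}$ is an upper contact jet of $f$ at $(x_C,y_C)$, and the Hessian here is block-diagonal $\operatorname{diag}(C I_n,\, D+2\epsilon I_m)$. Applying $F\#\calPos$-subharmonicity of $f$ and then $j^*$ yields $D+2\epsilon I_m\in \Pos_m$; letting $\epsilon\downarrow 0$ gives $D\in \Pos_m$ as required. I expect the main obstacle to be the limit argument $(x_C,y_C)\to(x_0,y_0)$: the extra $\epsilon\|y-y_0\|^2$ term is essential, since without it the slice inequality is not strict enough to rule out subsequential limits $y^*\ne y_0$, and without such a limit one cannot guarantee the penalized maximum is interior to $K$ and therefore yields an honest upper contact jet of $f$.
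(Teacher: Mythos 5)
Your proposal is correct. The paper itself offers no argument here --- it states that the lemma follows ``straight from the definition'' or from the authors' earlier paper --- so there is nothing to match line by line, but both halves of your write-up do the job. Part (1) is indeed purely formal: closedness passes through arbitrary intersections of preimages under the continuous linear maps $i_\Gamma^*$ and $j^*$, your identity $i_\Gamma^*(0,0,P)=\bigl(0,0,\begin{pmatrix}I & \Gamma^t\end{pmatrix}P\begin{pmatrix}I\\ \Gamma\end{pmatrix}\bigr)$ exhibits the pulled-back Hessian as a congruence of $P$ (and $j^*$ extracts a principal block), and negativity is preserved because both pullbacks fix $r$. Part (2) is the genuinely nontrivial half --- for merely upper-semicontinuous $f$ the restriction of a subsolution to a vertical slice is a small instance of the Harvey--Lawson restriction problem, and the phrase ``straight from the definition'' undersells it --- and your Crandall--Ishii penalization is the standard and correct way to handle it. The key steps all check out: $\frac{C}{2}\|x_C-x_0\|^2$ is bounded uniformly in $C$ so $x_C\to x_0$; upper semicontinuity gives $\limsup_C f(x_C,y_C)\le f(x_0,y^*)$ for any subsequential limit $y^*$, which combined with the slice contact inequality at $y^*$ yields $\epsilon\|y^*-y_0\|^2\le 0$; the interior maximum of $f-\Phi_C$ then produces an honest upper contact jet of $f$ with Hessian $\operatorname{diag}(CI_n,\,D+2\epsilon I_m)$, and applying $j^*$ gives $D+2\epsilon I_m\in\Pos_m$, whence $D\in\Pos_m$. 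You correctly identify the $\epsilon$-term as essential. The only cosmetic point worth adding is the final invocation of the equivalence between $\calPos$-subharmonicity and local convexity (cited in the paper as \cite[Example 14.2]{HL_Dirichletdualitymanifolds}) to pass from ``every upper contact jet of $y\mapsto f(x_0,y)$ has semipositive Hessian part'' to ``locally convex.''
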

\begin{proof}
The reader will easily prove these straight from the definition, or otherwise find the proofs in \cite{MP1}.
\end{proof}

\subsection{The almost everywhere theorem}

We will rely on a very useful theorem of Harvey-Lawson that characterizes $F$-subharmonic semiconvex functions in terms of second order jets almost everywhere.




\begin{definition}[Twice differentiability at a point]
We say that a function $f:X\to \mathbb R$ is \emph{twice differentiable} at $x_0\in X$ if there exists a $p\in \mathbb R^n$ and an $L\in \Sym_n^2$ such that for all $\epsilon>0$ there is a $\delta>0$ such that for $\|x-x_0\|<\delta$  we get
\begin{equation}\label{eq:twicediff} |f(x) - f(x_0) - p.(x-x_0) - \frac{1}{2}  (x-x_0)^tL (x-x_0) | \le \epsilon \|x-x_0\|^2.
\end{equation}
\end{definition}

When $f$ is twice differentiable at $x_0$ then the $p,L$ in \eqref{eq:twicediff} are unique, and moreover in this case $f$ is differentiable at $x_0$ and 
$$p = \nabla f|_{x_0}= \left(  \begin{array}{c} \frac{\partial f}{\partial x_1} \\ \frac{\partial f}{\partial x_2} \\ \vdots \\ \frac{\partial f}{\partial x_n} \end{array}\right)|_{x_0}\in \mathbb R^n.$$
When $f$ is twice differentiable at $x_0$ we shall refer to $L$ as the \emph{Hessian} of $f$ at $x_0$ and denote it by $\Hess(f)|_{x_0}$.  Of course when $f$ is $\mathcal C^{2}$ in a neighbourhood of $x_0$ then $\Hess_{x}(f)$ is the matrix with entries
$$(\Hess(f)_{x_0} )_{ij}: = \frac{\partial^2 f}{\partial x_i\partial x_j}|_{x_0}.$$

\begin{definition}[Second order jet]
Suppose that $f: X\to \mathbb R$ is twice differentiable at $x_0$.    We denote the \emph{second order jet} of $f$ at $x_0$ by
\begin{equation}J^2_{x_0}(f):= (f(x_0), \nabla f|_{x_0}, \Hess(f)|_{x_0}) \in J^2_{n} = \mathbb R\times \mathbb R^n\times \Sym_n^2.\label{eq:secondorderjet}
\end{equation}
\end{definition}

We have seen in Alexandrov's Theorem (Theorem \ref{thm:alexandrov:repeat}) that if $f$ is locally semiconvex then $J^2_x(f)$ exists for almost all $x$.

\begin{theorem}[The Almost Everywhere Theorem]\label{thm:ae}
Assume that $F\subset J^2(X)$ is a primitive subequation and let $f:X\to \mathbb R$ be locally semiconvex.    Then
$$ f\in F(X) \Leftrightarrow J^2_{x}(f)\in F_x \text{ for almost all } x\in X.$$
\end{theorem}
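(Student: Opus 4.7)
The plan is to prove the two implications separately: the forward direction ($\Rightarrow$) is a short Alexandrov-based argument, while the converse ($\Leftarrow$) is the substantive part and will rest on a Jensen-type perturbation.

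For ($\Rightarrow$), I would assume $f\in F(X)$. Alexandrov's Theorem \ref{thm:alexandrov:repeat} tells us that at almost every $x_0\in X$ the function $f$ is twice differentiable; setting $p=\nabla f|_{x_0}$ and $L=\Hess(f)|_{x_0}$, the definition of twice differentiability gives, for every $\epsilon>0$,
\[ f(x)\le f(x_0) + p.(x-x_0) + \tfrac{1}{2} (x-x_0)^t (L+2\epsilon\Id)(x-x_0) \]
for $x$ near $x_0$. So $(p,L+2\epsilon\Id)$ is an upper contact jet of $f$ at $x_0$, and $F$-subharmonicity gives $(f(x_0),p,L+2\epsilon\Id)\in F_{x_0}$; sending $\epsilon\to 0^+$ and using closedness of $F$ yields $J^2_{x_0}(f)\in F_{x_0}$.

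For the harder direction ($\Leftarrow$), suppose $J^2_x(f)\in F_x$ for almost every $x$, and let $(p,A)$ be an upper contact jet of $f$ at some $x_0\in X$; I must show $(f(x_0),p,A)\in F_{x_0}$. After first replacing $A$ by $A+\delta\Id$ and ultimately sending $\delta\to 0^+$ (using closedness and positivity of $F$), I may assume that the semiconvex function
\[ \psi(y) := f(y) - p.(y-x_0) - \tfrac{1}{2} (y-x_0)^t A (y-x_0) \]
attains a strict local maximum at $x_0$, so $\psi(y)<0=\psi(x_0)$ on $\overline{B_r(x_0)}\setminus\{x_0\}$ for some small $r>0$. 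For each small $v\in\mathbb R^n$, the function $y\mapsto \psi(y)+v.y$ attains its maximum over $\overline{B_r(x_0)}$ at an interior point $y_v$. The key input is a Jensen-type lemma asserting that as $v$ ranges over a small enough ball, the set of such maximizers $y_v$ has positive Lebesgue measure. Since the hypothesis guarantees that $f$ is twice differentiable with $J^2_x(f)\in F_x$ on a set of full measure, I can then select arbitrarily small $v$ for which $y_v$ lies in this good set.

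At any such $y_v$, the interior maximality condition forces $\nabla f|_{y_v}=p+A(y_v-x_0)-v$ and $\Hess(f)|_{y_v}\le A$, so the hypothesis combined with the positivity property of $F$ gives $(f(y_v),p+A(y_v-x_0)-v,A)\in F_{y_v}$. Letting $v\to 0$ (so $y_v\to x_0$ by strict maximality of $\psi$ at $x_0$) and invoking the closedness of $F$, I obtain $(f(x_0),p,A)\in F_{x_0}$, completing the argument. The main obstacle is the Jensen measure lemma itself: proving it requires applying the area formula to the subgradient map of $-\psi$ and using semiconvexity to control the resulting Jacobian, which is the technical heart of the argument and is precisely what forces the semiconvexity hypothesis throughout.
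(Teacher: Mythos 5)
The paper does not actually prove this theorem: its ``proof'' is a citation to \cite[Theorem 4.1]{HL_AE}, so there is no in-text argument to compare against. Your sketch reproduces what is essentially the standard (and Harvey--Lawson's own) argument. The forward direction is correct and complete as stated: Alexandrov applies because $f$ is locally semiconvex, the jet $(p, L+2\epsilon\Id)$ is an upper contact jet by the definition of twice differentiability, and closedness of $F_{x_0}$ finishes it. The converse is also correctly structured: perturb $A$ to $A+\delta\Id$ to get a strict maximum of $\psi$, use the Jensen-type measure lemma to find arbitrarily small $v$ whose perturbed maximizer $y_v$ lands in the full-measure set where $J^2_{y_v}(f)\in F_{y_v}$, read off $\nabla f|_{y_v}=p+A(y_v-x_0)-v$ and $\Hess(f)|_{y_v}\le A$ from interior maximality, apply Positivity to replace $\Hess(f)|_{y_v}$ by $A$, and pass to the limit using strictness (which forces $y_v\to x_0$), continuity of $f$ (automatic from semiconvexity, needed so $f(y_v)\to f(x_0)$), and closedness of $F$. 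The only substantive item left open is the Jensen measure lemma itself, which you correctly identify as the technical heart and for which you indicate the right proof route (area formula applied to the gradient map of the semiconvex perturbation); as long as that lemma is taken as known, your argument is a valid proof and matches the approach of the cited source.
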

\begin{proof}
See \cite[Theorem 4.1]{HL_AE}.
\end{proof}

\section{Partial sup-convolutions}\label{sec:partialsup}
Fix open $U\subset \mathbb R^n$ and $V\subset \mathbb R^m$, and suppose $f:U\times V\to \mathbb R$ is upper-semicontinuous and bounded.

\begin{definition}[Partial-Sup-Convolutions]\label{def:partialsupconvolution} For $\epsilon>0$ the \emph{partial sup-convolution} of $f$ is
\begin{equation}\label{eq:defpartialsupconvolution}
f^{\epsilon,p}(x,y): = \sup_{z\in U} \{ f(z,y) - \frac{1}{2\epsilon} \| z-x\|^2\} \text{ for } (x,y)\in U\times V.
\end{equation}
\end{definition}
For $\delta>0$ let
$$U(\delta) = \{ x\in \mathbb R^n : B_{\delta}(x)\subset U\}.$$

\begin{lemma}[Basic Properties of Partial-Sup-Convolutions]\label{lem:partialsupconvolutionbasicproperties}\ 
\begin{enumerate}[(i)]
\item (Strong Semiconvexity) Assume that for each fixed $x$ the function $y\mapsto f(x,y)$ is convex.  Then 
$$(x,y)\mapsto f^{\epsilon,p}(x,y) + \frac{1}{2\epsilon} \|x\|^2$$ is convex.
\item (Monotonicity) For $0<\epsilon'\le \epsilon$ we have \begin{equation}\label{eq:orderf}f \le f^{\epsilon',p} \le  f^{\epsilon,p}.\end{equation}
\item Let $\delta: =2  (\epsilon \|f\|_{\infty})^{1/2}$.   Then
 $$f^{\epsilon,p}(x,y)  = \sup_{\|\tau\|<\delta} \{ f(x+\tau,y)  - \frac{1}{2\epsilon} \|\tau\|^2\} \text{ for } (x,y)\in U(\delta)\times V.$$
\item (Pointwise convergence) 
$$\lim_{\epsilon\to 0^+} f^{\epsilon,p}(x,y) = f(x,y) \text{ for } (x,y)\in U\times V.$$
\item (Magic-Property)
Suppose that $F$ is a constant-coefficient primitive subequation on $U$ that has the Negativity Property and $f$ is $F\#\calPos$-subharmonic.  Then $f^{\epsilon,p}$ is $F\#\calPos$-subharmonic on $U(\delta)\times V$.
\end{enumerate}
\end{lemma}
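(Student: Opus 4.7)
The plan is to handle the five items in order, with (i)--(iv) being direct manipulations of the definition and (v) being the substantive content.

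For (i), I would rewrite
\[
f^{\epsilon,p}(x,y) + \frac{1}{2\epsilon}\|x\|^2 = \sup_{z\in U} \Bigl\{ f(z,y) - \frac{1}{2\epsilon}\|z\|^2 + \tfrac{1}{\epsilon} z\cdot x \Bigr\},
\]
and observe that for each fixed $z$ the bracketed expression is affine in $x$ and convex in $y$ by hypothesis, so it is jointly convex in $(x,y)$; a pointwise supremum of convex functions is convex. Part (ii) is immediate: taking $z=x$ in the defining sup gives $f\le f^{\epsilon,p}$, and the penalty $\tfrac{1}{2\epsilon}\|z-x\|^2$ decreases as $\epsilon$ grows, so $f^{\epsilon',p}\le f^{\epsilon,p}$ for $\epsilon'\le \epsilon$. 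For (iii), I would note that whenever $\|z-x\|\ge \delta=2(\epsilon\|f\|_\infty)^{1/2}$ one has $f(z,y) - \tfrac{1}{2\epsilon}\|z-x\|^2 \le \|f\|_\infty - 2\|f\|_\infty \le f(x,y)$, so on $U(\delta)\times V$ the supremum can be restricted to $\|z-x\|<\delta$, giving the stated form after substituting $\tau=z-x$. Part (iv) then follows because by (iii) any sequence of near-maximizers forces $\|z-x\|\to 0$ as $\epsilon\to 0^+$, and upper semicontinuity of $f$ combined with (ii) yields $f^{\epsilon,p}(x,y)\to f(x,y)$.

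The main content is (v). For each $\tau\in B_\delta(0)$ set
\[
f_\tau(x,y) := f(x+\tau,y) - \frac{1}{2\epsilon}\|\tau\|^2 \quad \text{on } U(\delta)\times V,
\]
so that by (iii) we have $f^{\epsilon,p} = \sup_{\tau\in B_\delta(0)} f_\tau$ on $U(\delta)\times V$. Since $F$ is constant coefficient, so is $F\#\calPos$, and Proposition \ref{prop:basicproperties}(5) gives that the $x$-translate $(x,y)\mapsto f(x+\tau,y)$ is $F\#\calPos$-subharmonic on $U(\delta)\times V$. The subequation $F\#\calPos$ inherits the Negativity Property from $F$ and from $\calPos$ (which has it trivially since it does not constrain $r$), so subtracting the non-negative constant $\tfrac{1}{2\epsilon}\|\tau\|^2$ preserves $F\#\calPos$-subharmonicity, and each $f_\tau$ is $F\#\calPos$-subharmonic. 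The family $\{f_\tau\}_{\tau\in B_\delta(0)}$ is uniformly bounded above by $\|f\|_\infty$. Moreover, the product lemma preceding this section forces $y\mapsto f(x,y)$ to be locally convex, so (i) applies and shows that $f^{\epsilon,p}$ is continuous on $U(\delta)\times V$. Being continuous, $f^{\epsilon,p}$ coincides with the upper semicontinuous regularization of $\sup_\tau f_\tau$, and Proposition \ref{prop:basicproperties}(4) yields that $f^{\epsilon,p}$ is $F\#\calPos$-subharmonic.

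The main obstacle is (v), specifically recognizing that the Gaussian penalty splits as a translation in the $x$-variable (handled by the constant-coefficient hypothesis) plus a non-negative additive constant (handled by the Negativity Property), so that each competitor $f_\tau$ remains $F\#\calPos$-subharmonic on the correct domain. Once this decomposition is in hand, continuity of $f^{\epsilon,p}$ secured through (i) bypasses any need to pass to a regularization, and the proof closes by invoking the standard supremum property for families of $F$-subharmonic functions.
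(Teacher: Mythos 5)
Your proof is correct and follows essentially the same route as the paper's: the same rewriting for (i), the same localization and change of variables for (iii)--(iv), and the same supremum-of-translates argument combined with continuity from (i) for (v). The only place you are more explicit than the paper is in noting that the Negativity Property is precisely what permits subtracting the constant $\frac{1}{2\epsilon}\|\tau\|^2$ from each translate, a step the paper's proof leaves implicit.
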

\begin{proof}
\begin{align*}f^{\epsilon,p}(x,y) + \frac{1}{2\epsilon} \|x\|^2& = \sup_{z\in U} \{ f(z,y) - \frac{1}{2\epsilon} \|z-x\|^2  + \frac{1}{2\epsilon} \|x\|^2\} \\
&=\sup_{z\in U} \{ f(z,y) + \frac{1}{\epsilon} x.z - \frac{1}{2\epsilon} \|z\|^2\}.
\end{align*}
Now for fixed $z$ the function $y\mapsto f(z,y)$ is assumed to be convex in $y$, and so the function $(x,y) \mapsto f(z,y)$ is convex in $(x,y)$.  Thus, again for $z$ fixed, $(x,y) \mapsto  f(z,y) + \frac{1}{\epsilon} x.z + \frac{1}{2\epsilon} \|z\|^2$ is convex in $(x,y)$, and hence so is $f^{\epsilon,p}(x,y) + \frac{1}{2\epsilon} \|x\|^2$ proving (i).

Item (ii) is immediate.  For (iii) we claim that
\begin{equation}\label{eq:partialsupconvolutionslocalises}f^{\epsilon,p}(x,y) = \sup_{z\in U : \|z-x\|<\delta}\{ f(z,y) - \frac{1}{2\epsilon} \|z-x\|^2\} \text{ for } (x,y)\in U\times V.\end{equation}
To see this let $M: = \|f\|_{\infty}$.  Then for $z\in U$ with  $\|z-x\|\ge \delta = \sqrt{4\epsilon M}$,
$$f(z,y) - \frac{1}{2\epsilon} \|z-x\|^2 \le M - \frac{1}{2\epsilon} \delta^2 = -M \le f(x,y)\le f^{\epsilon,p}(x,y)$$
which proves \eqref{eq:partialsupconvolutionslocalises}.  Then (iii) follows upon making the change of variables $\tau: = z-x$.  For the pointwise convergence fix $(x,y)\in U\times V$ and let $a>f(x,y)$.  Then $f<a$ on some open neighbourhood of $(x,y)$.  Let $\epsilon$ be small enough so that $B_\delta(x)$ is contained in this neighbourhood.  Then \eqref{eq:partialsupconvolutionslocalises} implies $f^{\epsilon,p}(x,y) \le a$, proving (iv).  For the final statement, since $F$ is constant coefficient for any fixed $\tau$ the function $f(x+\tau,y)$ is $F\#\calPos$-subharmonic (where defined), and hence (iii) shows $f^{\epsilon,p}$ as a supremum of $F\#\calPos$-subharmonic functions.  Now being $F\#\calPos$-subharmonic implies that $y\mapsto f(x,y)$ is convex, and so by (i) $f^{\epsilon,p}$ is certainly continuous and hence equal to its upper semicontinuous regularisation.  Thus $f^{\epsilon,p}$ is $F\#\calPos$-subharmonic on $U(\delta)\times V$ as claimed in (v).

\end{proof}

The next lemma reveals a surprising property of the above construction, namely that the partial sup-convolution of a semiconcave function is fibrewise semiconcave.

\begin{lemma}\label{lem:preservationofconcavity}
Suppose that  $f$ is $\kappa$-semiconcave for some $\kappa>0$.    Then for $\epsilon<\kappa^{-1}$ and fixed $x\in U$ the function
$$ y\mapsto f^{\epsilon,p}(x,y)-\frac{\kappa}{2} \|y\|^2$$
is concave.
\end{lemma}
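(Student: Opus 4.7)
The plan is to exhibit $y \mapsto f^{\epsilon,p}(x,y) - \frac{\kappa}{2}\|y\|^2$ as the negative of the marginal function of a jointly convex function of $(z,y)$, and then deduce its concavity in $y$ from Lemma \ref{lem:basicconvexityII} (applied with $\kappa = 0$, which is the classical fact that the marginal of a jointly convex function on a convex domain is convex).

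Concretely, negating and unpacking the definition of $f^{\epsilon,p}$ at fixed $x$, I would write
\begin{equation*}
-f^{\epsilon,p}(x,y) + \tfrac{\kappa}{2}\|y\|^2 = \inf_{z \in U}\Big\{\big[{-}f(z,y) + \tfrac{\kappa}{2}(\|z\|^2 + \|y\|^2)\big] + \big[\tfrac{1}{2\epsilon}\|z-x\|^2 - \tfrac{\kappa}{2}\|z\|^2\big]\Big\}.
\end{equation*}
The first bracket is convex in $(z,y)$ by the $\kappa$-semiconcavity hypothesis on $f$. The second bracket depends only on $z$ and has Hessian $(\epsilon^{-1}-\kappa)I$; the hypothesis $\epsilon < \kappa^{-1}$ is used precisely here to make this Hessian positive, rendering the term convex in $z$. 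Summing, the integrand is jointly convex in $(z,y)$.

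Taking the infimum of a jointly convex function over the first variable then gives a convex function of $y$, so $y \mapsto f^{\epsilon,p}(x,y) - \frac{\kappa}{2}\|y\|^2$ is concave, as required. I expect the only genuine subtlety to be the convexity hypothesis in Lemma \ref{lem:basicconvexityII}: it requires the domain to be convex with connected slices, which need not hold on a general open $U \times V$. This can be handled by invoking part (iii) of the preceding lemma to replace the sup over $z \in U$ by a sup over a ball $\|\tau\| < \delta$ in $\tau = z - x$ (valid on $U(\delta)$), and then restricting $V$ to a small convex ball around the point of interest. Since concavity is a local property in $y$, this localisation loses nothing, and no other obstacle is anticipated.
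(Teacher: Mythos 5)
Your proof is correct and is essentially the paper's own argument with the signs reversed: the paper writes $f^{\epsilon,p}(x,y)-\frac{\kappa}{2}\|y\|^2$ as a partial supremum over $z$ of the jointly concave function $\bigl[f(z,y)-\frac{\kappa}{2}\|z\|^2-\frac{\kappa}{2}\|y\|^2\bigr]+\bigl[\frac{\kappa-\epsilon^{-1}}{2}\|z\|^2+\frac{1}{\epsilon}z\cdot x-\frac{1}{2\epsilon}\|x\|^2\bigr]$, which is precisely the negative of your decomposition, and concludes by the same marginal-function principle. Your closing remark about localising so that the domain of the infimum is convex is a point the paper glosses over, but it does not alter the argument.
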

\begin{proof}
Let $x$ be fixed.  Then
\begin{align*}
 f^{\epsilon,p}(x,y) - \frac{\kappa}{2} \|y\|^2  &=  
 \sup_{z\in U} \{f(z,y) - \frac{\kappa}{2} \|y\|^2 - \frac{1}{2\epsilon} \|z-x\|^2\} \\
&= \sup_{z\in U} \{f(z,y) - \frac{\kappa}{2} \|z\|^2 - \frac{\kappa}{2} \|y\|^2  +\frac{\kappa-\epsilon^{-1}}{2} \|z\|^2  + \frac{1}{\epsilon} z.x - \frac{1}{2\epsilon} \|x\|^2\}.
\end{align*}
Observe that (since $x$ is fixed and $\kappa-\epsilon^{-1}<0$) the function $(z,y)\mapsto \frac{\kappa-\epsilon^{-1}}{2} \|z\|^2  + \frac{1}{\epsilon} z.x - \frac{1}{2\epsilon} \|x\|^2$ is convex as a function of $(z,y)$.  Furthermore by hypothesis $f(z,y) - \frac{\kappa}{2} \|z\|^2 - \frac{\kappa}{2} \|y\|^2$ is concave in $(z,y)$.  Hence  $y\mapsto f^{\epsilon,p}(x,y) - \frac{\kappa}{2} \|y\|^2$ is a supremum of functions concave in two variables, and thus is concave.
\end{proof}

\section{$F$-subharmonicity of marginal functions}\label{sec:fsubharmonicityofmarginals}
Let $\Omega\subset \mathbb R^{n+m}$ be open, convex and such that $\Omega_x$ is connected for all $x$.

\begin{proposition}\label{prop:semiI}
Let  $F\subset J^2(\mathbb R^{n})$ be a primitive subequation.  Let $f:\Omega\to \mathbb R$ be $F\#\calPos$-subharmonic, and suppose that for some $\sigma,\kappa_1,\kappa_2>0$ the function
\begin{equation}\label{eq:semiIH1} f(x,y) + \frac{\kappa_1}{2} \|x\|^2-\frac{\sigma}{2} \|y\|^2 \text{ is convex}
\text{ and }\end{equation}
and for each fixed $x$ the function
\begin{equation} \label{eq:semiIH2}  y\mapsto f(x,y) - \frac{\kappa_2}{2} \|y\|^2 \text{ is concave}\end{equation}
and that $\gamma(x)=\argmin_f(x)$ is single valued.  Then
$$ g(x): = \inf_{y\in \Omega_x} f(x,y)$$
is $F$-subharmonic.
\end{proposition}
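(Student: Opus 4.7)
The strategy is to verify the pointwise 2-jet criterion of the Almost Everywhere Theorem (Theorem \ref{thm:ae}) at a generic $x_0$. By Lemma \ref{lem:basicconvexityII} the marginal $g$ is $\kappa_1$-semiconvex, so Alexandrov's Theorem (Theorem \ref{thm:alexandrov:repeat}) shows that $g$ is twice differentiable almost everywhere; combined with Theorem \ref{thm:argmin1} (applied to $\tfrac{1}{2}f$, so that $\sigma/4$ plays the role of the $\sigma$ there), we get a full-measure subset of $\pi(\Omega)$ on which $g$ is twice differentiable \emph{and} $\gamma$ is differentiable, with derivative $\Gamma := d\gamma_{x_0} \in \Hom(\mathbb{R}^n, \mathbb{R}^m)$. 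Fix such an $x_0$, set $y_0 := \gamma(x_0)$ and $H := \Hess_{x_0}(g)$. Once we show $J^2_{x_0}(g) = (g(x_0), \nabla g(x_0), H) \in F_{x_0}$, Theorem \ref{thm:ae} finishes the proof.

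The heart of the argument is to exhibit an upper contact jet of $f$ at $(x_0, y_0)$ whose $i_\Gamma$-pullback is exactly $J^2_{x_0}(g)$. I propose the candidate
\[
 h(x,y) := g(x_0) + \nabla g(x_0)\cdot(x-x_0) + \tfrac12 (x-x_0)^t H (x-x_0) + \tfrac{K}{2}\|y - y_0 - \Gamma(x-x_0)\|^2,
\]
for a constant $K > \kappa_2$ to be fixed. To see $f(x,y) \le h(x,y) + o(\|x-x_0\|^2 + \|y-y_0\|^2)$ near $(x_0, y_0)$ I would combine three ingredients: (i) for each fixed $x$ the function $y\mapsto f(x,y)$ is convex (implied by the $F\#\calPos$-subharmonicity and strong convexity \eqref{eq:semiIH1}) and $\kappa_2$-semiconcave \eqref{eq:semiIH2}, hence $\mathcal C^{1,1}_{loc}$, and $\gamma(x)$ is its interior minimum so $\nabla_y f(x,\gamma(x)) = 0$ and $f(x,y) \le g(x) + \tfrac{\kappa_2}{2}\|y-\gamma(x)\|^2$; (ii) the second-order Taylor expansion of $g$ at $x_0$; (iii) the differentiability of $\gamma$, which yields $\gamma(x) = y_0 + \Gamma(x-x_0) + o(\|x-x_0\|)$. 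Expanding $\|y - y_0 - \Gamma(x-x_0)\|^2 = \|y-\gamma(x)\|^2 + 2(y-\gamma(x))\cdot o(\|x-x_0\|) + o(\|x-x_0\|^2)$ and absorbing the cross-term by Young's inequality (this is where $K$ must be chosen large) delivers the desired upper bound.

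Finally, a direct calculation gives the 2-jet of $h$ at $(x_0,y_0)$: its gradient is $(\nabla g(x_0), 0)$ and its Hessian has blocks $B = H + K\Gamma^t\Gamma$, $C = -K\Gamma^t$, $D = KI$. Using the formula \eqref{eq:defofistar} one checks $i_\Gamma^* A = B + C\Gamma + \Gamma^t C^t + \Gamma^t D\Gamma = H$ and $p_1 + \Gamma^t p_2 = \nabla g(x_0)$, so $i_\Gamma^* J^2_{(x_0,y_0)}(h) = J^2_{x_0}(g)$. Because $h$ is an upper contact quadratic for $f$ at $(x_0,y_0)$ and $f$ is $F\#\calPos$-subharmonic, the definition of $F\#\calPos$ forces $J^2_{x_0}(g) \in F_{x_0}$, completing the argument. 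I expect the main obstacle to be the error estimate in step (iii): the coupling of $o(\|x-x_0\|)$ from $\gamma$ with $y-\gamma(x)$ must be handled carefully, and the algebraic choice $\|y - y_0 - \Gamma(x-x_0)\|^2$ is precisely what makes the pullback land on $H$ (no other $K$-dependent terms survive), which is what lets us take $K$ as large as needed without spoiling the conclusion.
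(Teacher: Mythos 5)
Your strategy is exactly the paper's: work at a point $x_0$ where $g$ is twice differentiable (Alexandrov) and $\gamma$ is differentiable (Theorem \ref{thm:argmin1}), build a quadratic upper contact function for $f$ at $(x_0,\gamma(x_0))$ of the form ``Taylor polynomial of $g$ plus a multiple of $\|y-y_0-\Gamma(x-x_0)\|^2$'', use the fibrewise semiconcavity \eqref{eq:semiIH2} to dominate $f$, and pull back along $i_\Gamma$ to land on $\Hess_{x_0}(g)$; then invoke the Almost Everywhere Theorem. The pullback computation is correct and the three ingredients (i)--(iii) are the right ones.

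There is, however, one step that fails as written: you establish only $f(x,y)\le h(x,y)+o(\|x-x_0\|^2+\|y-y_0\|^2)$ and then treat $h$ as an upper contact quadratic, but the definition of an upper contact jet requires the \emph{exact} inequality $f\le h$ on a neighbourhood. Moreover, the residual error here cannot be absorbed by enlarging $K$: tracing your own estimates, the semiconcavity bound gives $f(x,y)\le g(x)+\tfrac{\kappa_2}{2}\|y-\gamma(x)\|^2$, and after using $\|y-\gamma(x)\|\le d(x,y)+\epsilon'\|x-x_0\|$ (from differentiability of $\gamma$) together with the second-order Taylor error of $g$, you are left with a leftover term of the form $C(\epsilon')\|x-x_0\|^2$ with $C(\epsilon')\to 0$ as the neighbourhood shrinks. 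This is a quadratic error in the $x$-direction, while $K$ only controls the coefficient of $d(x,y)^2$; no choice of $K$ removes it. The fix is the one the paper uses: add $\tfrac{\epsilon}{2}\|x-x_0\|^2$ to the candidate quadratic (shrinking the neighbourhood so that $C(\epsilon')<\epsilon/2$), which makes it a genuine upper contact jet whose $i_\Gamma$-pullback is $\Hess_{x_0}(g)+\epsilon\Id_n$ rather than $\Hess_{x_0}(g)$, and then let $\epsilon\to 0$ using that $F_{x_0}$ is closed. With that $\epsilon$-regularization and limiting argument inserted, your proof coincides with the paper's (which takes $K=2\kappa_2$).
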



\begin{proof}
By hypothesis $$g(x) = f(x,\gamma(x)).$$
Now $g$ is $\kappa$-semiconvex (Lemma \ref{lem:basicconvexityII}) so by Alexandrov's Theorem (Theorem \ref{thm:alexandrov:repeat}) $g$ is twice differentiable almost everywhere.  Furthermore \eqref{eq:semiIH1} allows us to invoke our results on the argmin function, so by Corollary \ref{cor:wisdiffae} $\gamma$ is differentiable almost everywhere.  Let $x_0$ be a point where $g$ is twice differentiable and $\gamma$ is differentiable, and we will show
\begin{equation}J^2_{x_0} g = (g(x_0), \nabla g|_{x_0}, \Hess_{x_0}(g)) \in F_{x_0}\label{eq:J2g}.\end{equation}
By the Almost Everywhere Theorem (Theorem \ref{thm:ae}) this implies that $g$ is $F$-subharmonic.

Actually we will show that for any $\epsilon>0$ it holds that
\begin{equation}(g(x_0), \nabla g|_{x_0}, \Hess_{x_0}(g)+\epsilon \Id_{n}) \in F_{x_0}.\label{eq:j2gepsilon}\end{equation}
Letting $\epsilon\to 0$ and using that $F_{x_0}$ is closed yields \eqref{eq:J2g}.

To this end set $y_0: = \gamma(x_0)$ and
$$\Gamma:= D\gamma|_{x_0}\in \Hom(\mathbb R^{n},\mathbb R^{m})$$
and $d(x,y)$ be the vertical distance between $(x,y)\in \mathbb R^{n}\times \mathbb R^{m}$ and the tangent to the graph of $\gamma$ at $(x_0,y_0)$, so  
$$ d(x,y) := \|y - y_0 - \Gamma (x-x_0)\| \text{ for } (x,y)\in \mathbb R^{n}\times \mathbb R^{m}.$$
Consider the quadratic
$$ q(x,y) = g(x_0) + \nabla g|_{x_0}.(x-x_0) +\frac{1}{2}  (x-x_0)^t\Hess_{x_0}(g) (x-x_0)+ \frac{\epsilon}{2}\|x-x_0\|^2 + \kappa_2  d(x,y)^2$$
for 
 $(x,y)\in \mathbb R^{n}\times \mathbb R^{m}$.  By construction $$q(x_0,y_0) = g(x_0) = f(x_0,\gamma(x_0)) = f(x_0,y_0),$$  and in Lemma \ref{lem:semiI} below we show that $q\ge f$ sufficiently near $(x_0,y_0)$.
Hence $(x_0,y_0)$ is an upper contact point for $f$ and
 \begin{align}J^2_{(x_0,y_0)}(q) &=  \left( q(x_0,y_0), \nabla q|_{(x_0,y_0}, \Hess_{(x_0,y_0}(q)\right)\\
 &=\left(f(x_0,y_0),\left(\begin{array}{c} \nabla g|_{x_0} \\0 \end{array}\right),\left(\begin{array}{cc} \Hess_{x_0}(g) +\epsilon \Id_{n}  + 2\kappa_2\Gamma^t\Gamma& -2\kappa_2\Gamma^t \\ -2\kappa_2\Gamma & 2\kappa_2\Id_{m}\end{array}\right)\right)
 \end{align}
 is an upper-contact jet of $f$ at $(x_0,y_0)$.     So as $f$ is $F\#\calPos$-subharmonic we have
$$ J^2_{(x_0,y_0)}(q)  \in (F\#\calPos)_{(x_0,y_0)}.$$
And from the definition of $i_{\Gamma}^*$,
 $$ i_{\Gamma}^* (J^2_{(x_0,y_0)}(q))=\Hess_{x_0}(g) +\epsilon \Id_{n}$$
which must lie in $F_{x_0}$.  This gives \eqref{eq:j2gepsilon} and completes the proof.
\end{proof}

 \begin{lemma}\label{lem:semiI}
 With the notation as in the proof of Theorem \ref{prop:semiI} we have 
  \begin{equation}\label{eq:qlarger} q(x,y) \ge f(x,y) \text{ for } (x,y) \text{ sufficiently near } (x_0,y_0).\end{equation}
 \end{lemma}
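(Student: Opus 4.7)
The idea is to bound $f$ above by $q$ in two stages: first use that $y\mapsto f(x,y)$ is $\kappa_2$-semiconcave to pass from $f(x,y)$ to $g(x)+\tfrac{\kappa_2}{2}\|y-\gamma(x)\|^2$, and then use the second-order expansion of $g$ at $x_0$ together with the differentiability of $\gamma$ at $x_0$ to compare this to $q(x,y)$.

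\emph{Step 1 (semiconcavity in $y$).} For fixed $x$ near $x_0$ the function $y\mapsto f(x,y)$ is convex (since $f$ is $F\#\calPos$-subharmonic, see \S\ref{sec:productsubequations}) and $\kappa_2$-semiconcave by \eqref{eq:semiIH2}, hence $\mathcal{C}^{1,1}_{\mathrm{loc}}$ on the open set $\Omega_x$. Since $\gamma(x)\in\Omega_x$ is an interior minimum, $\nabla_y f(x,\gamma(x))=0$, and the standard upper bound for a $\kappa_2$-semiconcave function at a point of differentiability yields
$$f(x,y)\le f(x,\gamma(x))+\tfrac{\kappa_2}{2}\|y-\gamma(x)\|^2=g(x)+\tfrac{\kappa_2}{2}\|y-\gamma(x)\|^2$$
for all $y\in\Omega_x$.

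\emph{Step 2 (expansion at $x_0$).} Since $g$ is twice differentiable at $x_0$, for $\|x-x_0\|$ sufficiently small
$$g(x)\le g(x_0)+\nabla g|_{x_0}\cdot(x-x_0)+\tfrac{1}{2}(x-x_0)^t\Hess_{x_0}(g)(x-x_0)+\tfrac{\epsilon}{4}\|x-x_0\|^2.$$
Since $\gamma$ is differentiable at $x_0$ with derivative $\Gamma$, one has $\gamma(x)-y_0-\Gamma(x-x_0)=o(\|x-x_0\|)$. Applying $(a+b)^2\le 2a^2+2b^2$ to the decomposition
$$y-\gamma(x)=\bigl(y-y_0-\Gamma(x-x_0)\bigr)-\bigl(\gamma(x)-y_0-\Gamma(x-x_0)\bigr)$$
gives $\tfrac{\kappa_2}{2}\|y-\gamma(x)\|^2\le \kappa_2\, d(x,y)^2+o(\|x-x_0\|^2)$. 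For $x$ close enough to $x_0$ the error term is bounded by $\tfrac{\epsilon}{4}\|x-x_0\|^2$, and combining the three estimates produces $f(x,y)\le q(x,y)$ on a neighbourhood of $(x_0,y_0)$.

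\emph{Main obstacle.} The only subtle ingredient is the identity $\nabla_y f(x,\gamma(x))=0$, which needs both that $\gamma(x)$ is interior to $\Omega_x$ (immediate from openness of $\Omega$) and that $f(x,\cdot)$ is actually differentiable at $\gamma(x)$. The latter is precisely why we use the combination of convexity and $\kappa_2$-semiconcavity in $y$ rather than either one alone; apart from this point, the rest of the argument is careful book-keeping of Taylor errors.
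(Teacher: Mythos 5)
Your proof is correct and follows essentially the same route as the paper: the key inequality $f(x,y)\le g(x)+\tfrac{\kappa_2}{2}\|y-\gamma(x)\|^2$ from fibrewise semiconcavity at the minimizer, the decomposition of $y-\gamma(x)$ via $d(x,y)$ and the differentiability of $\gamma$ at $x_0$ with the doubling inequality, and the second-order Taylor bound for $g$. The only cosmetic difference is that you justify the vanishing of the first-order term at $\gamma(x)$ via $\mathcal C^{1,1}$ regularity of $f(x,\cdot)$, whereas the paper absorbs this into the semiconcavity step directly; both are fine.
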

 \begin{proof}
Fix $\epsilon'>0$ small enough so $\epsilon' + \kappa_2\epsilon'^2 <\epsilon/2$.  That $\Gamma=D\gamma|_{x_0}$ means there is a $\delta>0$ such that for all $\|x-x_0\|<\delta$
 $$ \|\gamma(x) - y_0 - \Gamma(x-x_0)\| \le \epsilon' \|x-x_0\|.$$
 Shrinking $\delta$ is necessary, the definition of $g$ being twice differentiable at $x_0$ means \eqref{eq:twicediff} that for $\|x-x_0\|<\delta$ we also have
 $$ | g(x) - g(x_0) - \nabla g|_{x_0}.(x-x_0) -\frac{1}{2} (x-x_0)^t \Hess_{x_0}g(x-x_0) | \le \epsilon' \|x-x_0\|^2.$$
 
 Consider now a point $(x,y)$ with $\|x-x_0\|<\delta$ and $\|y-y_0\|<\delta$.  Then
  \begin{align}
  \| y-\gamma(x)\| &\le \|y-y_0 - \Gamma (x-x_0) \| + \|y_0+\Gamma (x-x_0)-\gamma (x)\|\\
  &\le d(x,y) + \epsilon' \|x-x_0\|. \end{align}
So
$$\| y-\gamma (x)\|^2 \le 2\epsilon'^2 \|x-x_0\|^2 + 2d(x,y)^2.$$
Now we use (in an essential way) hypothesis \eqref{eq:semiIH2}.  Since $\gamma (x)$ is the minimum of the function $y'\mapsto f(x,y')$ \eqref{eq:semiIH2} implies
$$ f(x,y) \le f(x,\gamma(x)) + \frac{\kappa_2}{2} \|y-\gamma(x)\|^2.$$
Thus
\begin{align*}
f(x,y) &\le g(x) + \kappa_2(\epsilon'^2 \|x-x_0\|^2 + d(x,y)^2)\\
&\le g(x_0) + \nabla g|_{x_0} (x-x_0) + \frac{1}{2} (x-x_0)^t\Hess_{x_0}g(x-x_0) \\&\quad + (\kappa_2\epsilon'^2 + \epsilon')\|x-x_0\|^2   + \kappa_2 d(x,y)^2\\
&\le q(x,y)
\end{align*}
as required.
\end{proof}

\begin{proof}[Proof of Theorem \ref{thm:introminimum}]
Let $f:X\times \mathbb R\to \mathbb R$ be locally semiconcave, bounded from below and $F\#\calPos$-subharmonic.  We are to show that $g(x):= \inf_y f(x,y)$ is $F$-subharmonic.

We first claim that without loss of generality we may assume in addition that for each $x$ it holds that $\argmin_f(x)$ is non-empty and single valued.  To prove this, for $j\ge 1$ let
$$ f_j(x,y) = f(x,y) + \frac{1}{j} \|y\|^2.$$
As $F$ depends only on the Hessian part, $f_j$ is still $F\#\calPos$-subharmonic, and is still bounded from below and semiconcave.  Moreover since $f$ is bounded from below, for each fixed $x$ the function $y\mapsto f(x,y)$ is strictly convex and tends to infinity as $|y|$ tends to infinity, implying that it has a unique global minimum.   By assumption the theorem applies to $f_j$ meaning that letting $g_j(x):= \inf_y f_j(x,y)$ the function $g_j$ is $F$-subharmonic.  But $g_j\searrow g$ pointwise as $j\to \infty$, and thus $g$ will be $F$-subharmonic as well, proving the claim.\medskip

So from now on assume that $\gamma(x)=\argmin_f(x)$ is single valued.  Fix $x_0\in \mathbb R^n$.  As $\gamma$ is continuous, there exist small balls $x_0\in U\subset X$ and $\gamma(x_0)\in V\subset \mathbb R^m$ such that $\gamma(U)\subset V$ and $f$ is semiconcave on $U\times V$.   For $\epsilon>0$ consider the function
$$f_\epsilon(x,y) := f^{\epsilon,p}(x,y) + \frac{\epsilon}{2} \|y\|^2 = \sup_{z\in U} \{ f(z,y) - \frac{1}{2\epsilon} \|z-x\|^2\} + \frac{\epsilon}{2} \|y\|^2.$$
We claim that for $\epsilon$ sufficiently small the following all hold:
\begin{enumerate}[(i)]
\item $f_{\epsilon}(x,y) + \frac{1}{2\epsilon} \|x\|^2  - \frac{\epsilon}{2} \|y\|^2$ is convex.
\item $f_{\epsilon}$ is $F\#\calPos$-subharmonic on $U'\times V$ for some smaller ball $x_0\in U'\subset U$.
\item $f^{\epsilon}\searrow f$ pointwise on $U\times V$ as $\epsilon\to 0^+$.
\item There is a $\kappa_2>0$ such that for each $x\in U$ the function $y\mapsto f_{\epsilon}(x,y) - \frac{\kappa_2}{2} \|y\|^2$ is concave.
\end{enumerate}
Items (i,ii,iii) follow from Lemma \ref{lem:partialsupconvolutionbasicproperties} (we have used here the hypothesis that $F$ depends only on the Hessian part so adding a multiple of $\|y\|^2$ preserves the property of being $F\#\calPos$-subharmonic by Lemma \ref{lem:sumsubharmoniconvex}).    The statement (iv) follows from Lemma \ref{lem:preservationofconcavity} (observing that the addition of $\frac{\epsilon}{2}\|y\|^2$ to the partial sup-convolution only means we may need to increase the value of $\kappa_2$)

Thus we are in a position to apply Proposition \ref{prop:semiI} to $f_{\epsilon}$ to conclude that if
$$g_{\epsilon}(x): = \inf_{y\in V} f_{\epsilon}(x,y)$$
then $g_{\epsilon}$ is $F$-subharmonic on $U'$.  But by (iii) if $x\in U'$ then
$$ g_{\epsilon}(x) \searrow \inf_{y\in V} f(x,y) = f(x,\gamma(x)) = g(x) \text{ as } \epsilon\to 0^+$$
and hence $g$ is also $F$-subharmonic on $U'$.  Since $x_0$ was arbitrary we conclude $g$ is $F$-subharmonic on all of $\mathbb R^n$ as required.
\end{proof}

\appendix
\section{The Implicit Function Theorem for Lipschitz Functions}\label{sec:appendiximplicit}

The following version of the Implicit function theorem is taken from \cite[Theorem 5.1]{Wuertz}, and we include a proof for convenience.

\begin{theorem}[Lipschitz Implicit Function Theorem]
Let $U_1\subset \mathbb R^{r}$ and $U_2\subset \mathbb R^{s}$ be open and
$$ J : U_1\times U_2 \to \mathbb R^{s}$$
be Lipschitz with the property that there is a $K>0$ such that 
$$\|J(p,y_1) - J(p,y_2)\| \ge K \|y_1-y_2\| \text{ for all } (p,y_1), (p,y_2) \in U_1\times U_2.$$
Suppose $a\in U_1, b\in U_2$ is such that
$$ J(a,b)=0.$$
There there exists an open $a\in V\subset U_1$ and a Lipschitz map 
$$\phi:V\to U_2$$
such that $\phi(a)=b$ and
\begin{equation}J(p,\phi(p)) =0 \text{ for all } p\in V.\label{eq:implicitJ}\end{equation}
\end{theorem}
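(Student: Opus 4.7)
The plan is to prove existence of $\phi$ by a Brouwer invariance-of-domain argument, and to read off the Lipschitz bound from the two-sided estimates on $J$. The key device is the auxiliary map
$$\Psi : U_1\times U_2 \to \mathbb{R}^r\times \mathbb{R}^s, \qquad \Psi(p,y) = (p, J(p,y)),$$
which is Lipschitz (hence continuous), and which is injective: if $\Psi(p_1,y_1) = \Psi(p_2,y_2)$ then $p_1 = p_2 =: p$ and $J(p,y_1) = J(p,y_2)$, and the lower bound $\|J(p,y_1) - J(p,y_2)\| \geq K \|y_1 - y_2\|$ forces $y_1 = y_2$.

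Both the domain and target of $\Psi$ are open subsets of $\mathbb{R}^{r+s}$, so Brouwer's invariance of domain theorem applies and tells us $\Psi$ is an open map. In particular $\Psi(U_1\times U_2)$ is open and contains $\Psi(a,b) = (a,0)$. I would then choose product neighborhoods $V\times W$ of $(a,0)$ with $V\times W \subset \Psi(U_1\times U_2)$; for each $p \in V$ the point $(p,0)$ lies in the image, so there exists some $y \in U_2$ with $J(p,y) = 0$, and this $y$ is unique by the lower bound. Defining $\phi(p)$ to be this value gives $\phi(a) = b$ and $J(p,\phi(p)) = 0$ on $V$ automatically.

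For the Lipschitz estimate, let $L$ be a Lipschitz constant for $J$ and set $y_i := \phi(p_i)$ for $p_1, p_2 \in V$. Since $J(p_1,y_1) = 0 = J(p_2,y_2)$,
$$K \|y_1 - y_2\| \;\leq\; \|J(p_1,y_1) - J(p_1,y_2)\| \;=\; \|J(p_2,y_2) - J(p_1,y_2)\| \;\leq\; L\|p_1-p_2\|,$$
so $\phi$ is $(L/K)$-Lipschitz on $V$.

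The genuinely nontrivial step is the existence of a zero of $y \mapsto J(p,y)$ for $p$ near $a$, and this is where I expect the real work to lie. A Banach contraction on a map like $y \mapsto y - \alpha J(p,y)$ does not directly work, because the lower Lipschitz bound does not control the inner product $(y_1-y_2)\cdot(J(p,y_1)-J(p,y_2))$, and so one cannot arrange contraction by tuning $\alpha$. Invariance of domain furnishes local surjectivity of $J_p$ near $0$ at no extra cost; equivalently, one could run a Brouwer degree argument on $J_p$ restricted to a small closed ball about $b$, using the lower bound to keep $J_p$ nonvanishing on the boundary for $p$ near $a$ and the injectivity of $J_a$ to identify the degree as $\pm 1$.
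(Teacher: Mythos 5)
Your proposal is correct and uses the same central device as the paper: apply Brouwer's invariance of domain to the map $(p,y)\mapsto(p,J(p,y))$, whose injectivity follows from the lower bound in $y$, to get local solvability of $J(p,\cdot)=0$ near $a$. The one place you diverge is in extracting the Lipschitz bound for $\phi$: the paper rescales to $(p,\epsilon J(p,y))$ and shows this is globally bi-Lipschitz for small $\epsilon$, so that $\phi$ inherits the Lipschitz property from the inverse map, whereas your direct estimate $K\|\phi(p_1)-\phi(p_2)\|\le\|J(p_1,\phi(p_2))-J(p_2,\phi(p_2))\|\le L\|p_1-p_2\|$ gets the explicit constant $L/K$ with less work and is a genuine (if small) simplification.
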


\begin{proof}
For small $\epsilon>0$ (to be determined) let
$$ \hat{J}: U_1\times U_2\to \mathbb R^{r+s} \text{ be } \hat{J}(p,y)  =(p, \epsilon J(p,y))$$
which is Lipschitz as $J$ is assumed to be Lipschitz.   We claim that as long as $\epsilon$ is sufficiently small, $\hat{J}$ is bi-Lipschitz, i.e. there is a $C>0$ such that
\begin{equation}\|\hat{J}(p_1,y_1) - \hat{J}(p_2,y_2)\| \ge C \| (p_1,y_1)  - (p_2,y_2)\| \label{eq:hatJbilipschitz}\end{equation}
for all $(p_i,y_i)\in U_1\times U_2$.

To see this, say $J$ has Lipschitz constant $M$ and let $(p_i,y_i)\in U_1\times U_2$.  Then
\begin{align*} K^2 \|y_1-y_2\|^2 &\le \|{J}(p_1,y_1) - {J}(p_1,y_2)\|^2 \\
&\le 2 ( \|{J}(p_1,y_1) - {J}(p_2,y_2)\|^2 + \|{J}(p_2,y_2) - {J}(p_1,y_2)\|^2)\\
&\le 2 \|{J}(p_1,y_1) - {J}(p_2,y_2)\|^2  + 2M^2 \|p_2-p_1\|^2.
\end{align*}
Multiplying by $\epsilon^2/2$ and rearranging gives
\begin{align*}
\frac{K^2\epsilon^2}{2} \|y_1-y_2\|^2 + (1-\epsilon^2 M^2) \|p_1-p_2\|^2& \le \epsilon^2 \|{J}(p_1,y_1) - {J}(p_2,y_2)\|^2 + \|p_1-p_2\|^2\\
&= \| \hat{J}(p_1,y_1) - \hat{J}(p_2,y_2)\|^2.
\end{align*}
So if we take $\epsilon$ small enough so $1-\epsilon^2 M^2 \ge \frac{K^2\epsilon^2}{2}=:C^2$ then
$$ \| \hat{J}(p_1,y_1) - \hat{J}(p_2,y_2)\|^2 \ge C^2( \|y_1-y_2\|^2 + \|p_1-p_2\|^2) = C^2 \|(p_1,y_1) - (p_2,y_2)\|^2$$
as claimed in \eqref{eq:hatJbilipschitz}.

In particular $\hat{J}$ is continuous and injective.  Thus by Brouwer's Invariance of Domain Theorem \cite[Corollary 19.8]{Bredon}, $\hat{J}$ is an open map.  So $V:= \hat{J}(U_1\times U_2)\subset \mathbb R^{r+s}$ is open and $\hat{J}:U_1\times U_2\to V$ is a continuous bijection with continuous inverse $\hat{J}^{-1}:V\to U_1\times U_2$.   In fact as $\hat{J}$ is bi-Lipschitz, we get that $\hat{J}^{-1}$ is Lipschitz.

Denote by $\pi_1:\mathbb R^r\times \mathbb R^s\to \mathbb R^r$ and $\pi_2:\mathbb R^r\times \mathbb R^s\to \mathbb R^s$ the projections, and let $B$ be a small ball around $a$ so that $B\subset U_1$ and $B\times \{0\} \subset \hat{J}(\pi_2^{-1}(U_2))$.   Define $\phi:B\to U_1\subset \mathbb R^r$ by
$$\phi(p) = \pi_2 \hat{J}^{-1}(p,0).$$
 Then $\phi$ is Lipschitz and $\hat{J}^{-1}(a,0)=(a,b)$ gives $\phi(a)=b$.  Moreover if $p\in V$ then
\begin{align*}
(p,0) &= \hat{J} \hat{J}^{-1} (p,0)  = J ( \pi_1 \hat{J}^{-1}(p,0), \pi_2 \hat{J}^{-1}(p,0)) 
\\& =J ( \pi_1 \hat{J}^{-1}(p,0), \phi(p))  = ( \pi_1 \hat{J}^{-1}(p,0), \epsilon J( \pi_1 \hat{J}^{-1}(p,0) ,\phi(p)).
\end{align*}
Thus
$$p =\pi_1 \hat{J}^{-1}(p,0)$$
and
$$0 = \epsilon J( \pi_1 \hat{J}^{-1}(p,0) ,\phi(p))  = \epsilon J(p,\phi(p))$$
proving \eqref{eq:implicitJ}

\end{proof}

\bibliography{minimumprinciple2}{}
\bibliographystyle{plain}
\end{document}